\theoremstyle{plain}
\newtheorem{thm}{Theorem}[section]
\newtheorem{lem}[thm]{Lemma}
\newtheorem{prop}[thm]{Proposition}
\theoremstyle{definition}
\newtheorem{rem}[thm]{Remark}
\begin{document}

\title[Genus of the commuting graphs]{On the genus of the commuting graphs of finite non-abelian groups}

\author[A. K. Das  and  D. Nongsiang]{Ashish Kumar Das*}

\address{A. K. Das, Department of Mathematics, North-Eastern Hill University,
Permanent Campus, Shillong-793022, Meghalaya, India.}

\email{akdasnehu@gmail.com}

\author[]{  Deiborlang Nongsiang}
\address{D. Nongsiang, Department of Mathematics, North-Eastern Hill University,
Permanent Campus, Shillong-793022, Meghalaya, India.}

\email{ndeiborlang@yahoo.com}

\begin{abstract}
The commuting graph of a non-abelian group is a simple graph in which the vertices are the non-central elements of the group, and two distinct vertices are adjacent if and only if they commute. In this paper, we classify (up to isomorphism) all finite non-abelian groups whose commuting graphs are acyclic, planar or toroidal. We also derive  explicit formulas for the genus of the commuting graphs of some well-known class of finite non-abelian groups, and show that, every collection of finite non-abelian groups whose commuting graphs have the same genus is finite.
\end{abstract}

\thanks{*Corresponding author}
\subjclass[2010]{Primary 20D60; Secondary 05C25}
\keywords{Commuting graph; Finite group}

\maketitle

\section{Introduction} \label{S:intro}
Let G be a non-abelian group and $Z(G)$ be its center. The {\em commuting graph} of $G$, denoted by $\Gamma_c (G)$, is a simple undirected graph in which the vertex set is $G \setminus Z(G)$,  and two vertices $x$ and $y$ are adjacent if and only if $xy=yx$. This graph is precisely the complement of the non-commuting graph of a group considered in \cite{aam} and \cite{mog}.  The origin of this notion lies in a seminal paper by R. Brauer and K. A. Fowler \cite{bf} who were concerned primarily with the classification of the finite simple groups. However, the ever-increasing popularity of the topic is often attributed to a question, posed in 1975 by Paul Erd\"{o}s and answered   affirmatively  by B. H. Neumann \cite{neu}, asking whether or not a  non-commuting graph having no infinite complete subgraph possesses a finite bound on the cardinality of  its  complete subgraphs. In recent years, the  commuting graphs of groups has become a topic of research for many mathematicians (see, for example, \cite{bbhr}, \cite{ira1}). In \cite{ira2}, it was conjectured that the commuting graph of a finite group is either disconnected or has diameter bounded above by a constant independent of the group G. This conjecture was well-supported in \cite{par} and \cite{ss}. However, in \cite{gp}, it is shown  that, for all positive integers $d$, there exists a finite special $2$-group G such that the commuting graph of G has diameter greater than $d$. But in \cite{mor}, it is proved that for finite groups with trivial center the conjecture made in \cite{ira2} holds good.  The concept of commuting graphs of groups (taking, as the vertices, the non-trivial elements of the group in place of non-central elements) has also been recently used in  \cite{rap} to show that finite quotients of the multiplicative group of a finite dimensional division algebra are solvable. There is also a ring  theoretic version  of commuting graphs  (see, for example, \cite{akb1}, \cite{akb2}). 

Most of the works cited above on  commuting graphs of groups deal with connectedness, diameter and some algebraic aspects of the graph. Also, some of the results for the non-commuting graphs of groups have their obvious analogues for the commuting counterparts, the commuting and non-commuting graphs being complements of each other. In the present  paper, however, we deal with a topological aspect, namely, the genus of the commuting graphs of finite non-abelian groups,  and on this count the commuting and the non-commuting graphs are independent of each other. Here we show that every collection of finite non-abelian groups whose commuting graphs have the same genus is finite.   One of the sections in this paper is  devoted entirely to the computation of the genus of the commuting graphs of some well-known families of finite non-abelian groups.  The primary objective of this paper is, of course, to determine, up to isomorphism, all finite non-abelian groups whose commuting graphs are planar or toroidal, that is, can be drawn on the surface of a sphere or of a torus (without any crossing of edges).  We, however, begin by classifying  all non-abelian groups whose commuting graphs have no triangles, which, in fact, turns out to be equivalent to saying that the corresponding non-commuting graphs are planar. It may be mentioned here that the motivation for this paper comes from \cite{das}, \cite{mai}, \cite{wang}, \cite{wic1} and \cite{wic2}, where similar problems for certain graphs associated to finite rings have been addressed.

\section{Some prerequisites}\label{S:pre}
In this section, we recall certain graph theoretic  terminologies (see, for example, \cite{west} and \cite{whit}) and some well-known results which have been used extensively in the
forthcoming sections. Note that all graphs considered in this and the following sections are  simple graphs, that is, graphs   without loops or multiple edges.

Let $\Gamma$ be a graph with vertex set $V(\Gamma)$ and edge set $E(\Gamma)$.  Let $x,y \in V(\Gamma)$. Then  $x$ and $y$ are said to be {\it adjacent} if $x\neq y$ and there is an edge $x-y$ in $E(\Gamma)$ joining $x$ and $y$.   A path between $x$ and $y$ is a sequence  of adjacent vertices, often written as $x - x_1 - x_2 - \dots -x_n - y$, where the vertices $x, x_1, x_2, \dots, x_n, y$ are all distinct (except, possibly, $x$ and $y$).   $\Gamma$ is said to be {\it connected} if there is a path between every pair of distinct vertices in $\Gamma$.  A path between $x$ and $y$ is called a {\it cycle} if $x=y$. The number of edges in a path or a cycle, is called its {\it length}.  A cycle of length $n$ is called an $n$-cycle, and a $3$-cycle is also called a triangle. The {\it girth} of    $\Gamma$ is the minimum of the lengths of all cycles in     $\Gamma$, and is denoted by ${\rm girth}(\Gamma)$. If       $\Gamma$ is {\it acyclic}, that is, if $\Gamma$ has no cycles, then we write ${\rm girth}(\Gamma)=\infty$. 

A graph $G$ is said to be {\it complete} if there is an edge between every pair of distinct vertices in $G$. We denote the complete graph with $n$ vertices by $K_n$. A {\it bipartite graph} is the one whose vertex set can
be partitioned into two disjoint parts in such a way that the two end vertices of every edge lie in different parts. Among the bipartite graphs, the {\it complete bipartite graph} is the one in which two vertices are
adjacent if and only if they lie in different parts. The complete bipartite graph, with parts of size $m$ and $n$, is denoted by $K_{m,n}$.

A subset of the vertex set of a graph $\Gamma$ is called a \textit{clique} of $\Gamma$ if it consists entirely of pairwise adjacent vertices. The least upper bound of the sizes of all the cliques of $G$ is called the \textit{clique number} of $\Gamma$, and is denoted by $\omega (\Gamma)$. The  \textit{chromatic number}  of a graph $\Gamma$, written $\chi(\Gamma)$, is the minimum number of colors needed to label the vertices so that adjacent vertices receive different colors. Clearly, $\omega (\Gamma) \leq \chi(\Gamma)$.
 
Given a graph $\Gamma$, let $U$ be a nonempty subset of   $V(\Gamma)$. Then  the \textit{induced subgraph} of $\Gamma$ on $U$  is defined to be the graph $\Gamma [U]$  in which the vertex set  is $U$ and  the edge set consists precisely of those edges in $\Gamma$ whose endpoints lie in $U$.  If  $\{\Gamma_{\alpha}\}_{\alpha \in \Lambda}$ is a family of subgraphs of a graph   $\Gamma$, then the union             $\underset{\alpha \in \Lambda} \cup  \Gamma_{\alpha}$ denotes the subgraph of $\Gamma$ whose vertex set is $\underset{\alpha \in \Lambda} \cup V(\Gamma_{\alpha})$ and the edge set is   $\underset{\alpha \in \Lambda} \cup E(\Gamma_{\alpha})$. Further, given a graph $\Gamma$, its complement is defined to be the graph in which the vertex set is the same as the one in $\Gamma$ and  two distinct vertices  are adjacent if and only if they are not adjacent vertices in $\Gamma$.

The \textit{genus} of a graph $\Gamma$, denoted by          $\gamma(\Gamma)$, is the smallest non-negative integer $n$ such that the graph can be embedded on the surface obtained by attaching $n$ handles to a sphere. Clearly, if $\tilde{\Gamma}$ is a subgraph of $\Gamma$, then $\gamma(\tilde{\Gamma}) \leq \gamma(\Gamma)$. Graphs having genus zero are called \textit{planar} graphs, while those having genus one are called \textit{toroidal} graphs.

A {\em block} of a graph $\Gamma$ is a connected subgraph $B$ of $\Gamma$ that is maximal with respect to the property that removal of a single vertex (and the incident edges) from $B$ does not make it disconnected, that is, the graph $B\setminus \{v\}$ is connected for all $v \in V(B)$. Given a graph $\Gamma$, there is  a unique finite collection $\mathfrak{B}$ of blocks   of $\Gamma$, such that $ \Gamma  = \underset{B \in \mathfrak{B}} \cup B$. The collection 
$\mathfrak{B}$ is called the {\em block decomposition} of $\Gamma$.  In \cite[Corollary 1]{bhky}, it has been proved  that the genus of a graph is the sum of the genera of its blocks. Thus, it follows that 

\begin{lem}\label{lem1}
If a graph $\Gamma$ has two disjoint subgraphs $\Gamma_1$ and $\Gamma_2$ such that $\Gamma_1 \cong K_m$ and $\Gamma_2 \cong K_n$ for some positive integers $m$ and $n$, then   $\gamma(\Gamma ) \geq  \gamma(K_n) + \gamma(K_m)$.
\end{lem}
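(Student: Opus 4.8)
The plan is to exploit the two structural facts already recorded in Section~\ref{S:pre}: that the genus is monotone under passage to subgraphs (if $\tilde\Gamma \subseteq \Gamma$ then $\gamma(\tilde\Gamma)\le\gamma(\Gamma)$), and that, by \cite[Corollary~1]{bhky}, the genus of a graph equals the sum of the genera of its blocks. The underlying idea is that the vertex-disjoint complete subgraphs $\Gamma_1$ and $\Gamma_2$ together form a subgraph of $\Gamma$ whose two connected components are themselves blocks; hence the genus of that subgraph splits off exactly as $\gamma(K_m)+\gamma(K_n)$, and this quantity is bounded above by $\gamma(\Gamma)$. In particular, no explicit value of either $\gamma(K_m)$ or $\gamma(K_n)$ will be needed, and the inequality should follow purely formally.

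Concretely, I would proceed in three steps. First, since $\Gamma_1$ and $\Gamma_2$ are subgraphs of $\Gamma$ with $V(\Gamma_1)\cap V(\Gamma_2)=\emptyset$, their union $\Delta:=\Gamma_1\cup\Gamma_2$ is again a subgraph of $\Gamma$, and as an abstract graph $\Delta$ is the disjoint union of a copy of $K_m$ and a copy of $K_n$. Second, I would invoke the subgraph inequality recorded above to get $\gamma(\Delta)\le\gamma(\Gamma)$. Third, I would compute $\gamma(\Delta)$ through the block-additivity theorem: the claim is that the block decomposition of $\Delta$ is precisely $\{\Gamma_1,\Gamma_2\}$, so that $\gamma(\Delta)=\gamma(\Gamma_1)+\gamma(\Gamma_2)=\gamma(K_m)+\gamma(K_n)$. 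Combining the second and third steps then yields $\gamma(\Gamma)\ge\gamma(K_m)+\gamma(K_n)$, as required.

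The one point that requires care — and the only real content beyond bookkeeping — is the claim that each $\Gamma_i$ constitutes a single block of $\Delta$, so that $\{\Gamma_1,\Gamma_2\}$ really is the block decomposition. Here I would argue that a complete graph is always a single block: for $m\ge 3$ the graph $K_m$ is $2$-connected and, being its own maximal such subgraph, forms one block, while for $m\le 2$ it is a single edge or a single vertex, which is a block by definition. Since $\Gamma_1$ and $\Gamma_2$ lie in different connected components of $\Delta$, they share neither a vertex nor an edge, so no amalgamation of blocks across the two components can occur, and the decomposition is exactly $\{\Gamma_1,\Gamma_2\}$. The degenerate cases $m\le 2$ or $n\le 2$ are in any event immediate, since then the corresponding term $\gamma(K_m)$ or $\gamma(K_n)$ vanishes. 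Once this block identification is in place, the result follows at once.
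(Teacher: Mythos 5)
Your proof is correct and takes essentially the same route as the paper: the paper obtains this lemma directly from the Battle--Harary--Kodama--Youngs block-additivity theorem (\cite[Corollary 1]{bhky}) combined with the monotonicity of genus under subgraphs, exactly the two ingredients you use. The only difference is that you spell out the block identification (each complete component is a single block) which the paper leaves implicit, so your write-up is a fleshed-out version of the paper's one-line deduction.
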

 
 We conclude the section with the following two useful results.

\begin{lem}[\cite{whit}, Theorem 6-38] \label{kn}
If $n\geq 3$, then $$\gamma(K_{n})=\left\lceil \frac{(n-3)(n-4)}{12}\right
\rceil.$$
\end{lem}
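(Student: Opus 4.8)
The plan is to establish the formula by the standard two-sided argument: a lower bound coming from Euler's formula together with the girth constraint, and a matching upper bound coming from an explicit minimal-genus embedding of $K_n$. This is the celebrated Ringel--Youngs theorem (the solution of the Heawood map-colouring problem), and it is worth stressing in advance that only the lower bound is elementary; the construction realising it is by far the substantial part.

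First I would treat the lower bound. Take a cellular embedding of $K_n$ in the orientable surface of genus $g=\gamma(K_n)$, and let $V$, $E$, $F$ be the numbers of vertices, edges and faces. Here $V=n$ and $E=\binom{n}{2}=\tfrac{n(n-1)}{2}$, and Euler's formula gives $V-E+F=2-2g$. Since $K_n$ is a simple graph with $n\geq 3$, every face boundary uses at least three edges while every edge lies on at most two faces, so $3F\leq 2E$. Substituting $F=2-2g-V+E$ into this inequality and simplifying yields
$$2g\geq \frac{(n-3)(n-4)}{6},\qquad\text{hence}\qquad g\geq \frac{(n-3)(n-4)}{12}.$$
Because $g$ is a non-negative integer, this forces $\gamma(K_n)\geq \left\lceil \frac{(n-3)(n-4)}{12}\right\rceil$, which settles one direction.

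For the reverse inequality one must exhibit, for every $n\geq 3$, an embedding of $K_n$ on the surface of genus exactly $\left\lceil \frac{(n-3)(n-4)}{12}\right\rceil$; equivalently, an embedding whose faces are (essentially) all triangles, so that the bound $3F\leq 2E$ above is met with equality. This is the hard part, and it is where I would not attempt to reinvent the wheel. The standard route encodes a suitable rotation system by means of a current graph and reads the desired near-triangular embedding off from it, but the verification splits into the twelve residue classes of $n$ modulo $12$, and several of these classes require separate and delicate constructions (with additional ``vortices'' or small ad hoc adjunctions). Rather than reproduce this long case analysis, I would invoke the completed solution as recorded in \cite{whit}, Theorem 6-38, which supplies precisely these embeddings and thereby, together with the Euler-formula lower bound above, gives the claimed value of $\gamma(K_n)$.
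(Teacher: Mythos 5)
The paper offers no proof of this lemma at all---it is quoted verbatim from \cite{whit} (Theorem 6-38), i.e.\ the Ringel--Youngs theorem---so your proposal, which supplies the elementary Euler-formula lower bound and defers the genus-realising embeddings to that same source, is correct and essentially in the same spirit. Your lower-bound computation is carried out correctly; the only point worth making explicit is that Euler's formula $V-E+F=2-2g$ requires a \emph{cellular} embedding, so one should cite the standard fact (due to Youngs) that a minimum-genus embedding of a connected graph is automatically cellular.
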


\begin{lem}[\cite{whit}, Theorem 6-37] \label{knm}
If $m,n\geq 2$, then $$\gamma(K_{m,n})=\left\lceil \frac{(m-2)(n-2)}{4}
\right\rceil.$$
\end{lem}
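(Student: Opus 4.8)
The plan is to prove the equality by sandwiching $\gamma(K_{m,n})$ between matching lower and upper bounds. The lower bound comes from Euler's formula applied to a minimum-genus (hence $2$-cell) embedding, exploiting that $K_{m,n}$ is triangle-free; the upper bound comes from exhibiting an explicit embedding, due to Ringel, that realizes the claimed value.

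For the lower bound, fix a minimum-genus embedding of $K_{m,n}$ on the orientable surface of genus $g=\gamma(K_{m,n})$; such an embedding is cellular, so Euler's formula $V-E+F=2-2g$ holds, with $V=m+n$ and $E=mn$. Since $K_{m,n}$ is bipartite it has no odd cycles, and as $m,n\geq 2$ it contains a $4$-cycle and is $2$-connected; hence its girth is $4$ and every face boundary is a cycle of length at least $4$. Counting edge--face incidences (each edge lies on exactly two face boundaries) gives $4F\leq 2E$, i.e.\ $F\leq \tfrac12 mn$. Substituting,
\[
2-2g = V-E+F \leq (m+n)-mn+\tfrac12 mn = (m+n)-\tfrac12 mn,
\]
and rearranging yields
\[
g \geq 1-\tfrac12(m+n)+\tfrac14 mn = \tfrac14(m-2)(n-2).
\]
As $g$ is an integer, $g\geq \left\lceil \tfrac14(m-2)(n-2)\right\rceil$, which is the desired lower bound.

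For the upper bound I would construct an embedding making the Euler estimate tight. The idea is to specify a rotation system---a cyclic ordering of the incident edges at each vertex---and trace the induced faces, choosing the rotations so that as many faces as possible are quadrilaterals. A quadrilateral embedding, in which every face is a $4$-cycle, attains $F=\tfrac12 mn$ and hence genus exactly $\tfrac14(m-2)(n-2)$; such embeddings exist precisely when the parities of $m$ and $n$ allow $(m-2)(n-2)\equiv 0 \pmod 4$. In the remaining cases one arranges the rotation so that only a few faces are larger than quadrilaterals, contributing exactly enough extra genus to reach the ceiling. Ringel's construction places the two colour classes in an arithmetic (modular) pattern and verifies the face count by a residue-based case analysis on $m$ and $n$.

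The main obstacle is the upper-bound construction: one must check that the chosen rotation system really does embed $K_{m,n}$ (that it is connected and each edge is traversed exactly twice) and that it produces exactly the predicted number of faces in every parity case, so that the genus meets the ceiling rather than exceeding it. This is a combinatorial case analysis rather than a single clean computation, and the delicate points are the small values of $m$ and $n$ together with the boundary residue classes. Since the statement is quoted directly from \cite{whit}, I would cite that reference for the embedding and treat the Euler-formula lower bound as the conceptual heart of the argument.
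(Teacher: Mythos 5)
The paper offers no proof of this lemma at all: it is quoted verbatim as a known result, with the citation to White's book (Theorem 6-37, which is Ringel's 1965 theorem) doing all the work. Your proposal is therefore strictly more informative than the paper, and it is essentially correct. The Euler-formula lower bound is complete and sound: $K_{m,n}$ is connected, so a minimum-genus embedding is $2$-cell by Youngs' theorem, and the count $4F\leq 2E$ combined with $V-E+F=2-2g$ gives $g\geq \frac{1}{4}(m-2)(n-2)$, hence $g\geq\left\lceil \frac{(m-2)(n-2)}{4}\right\rceil$ by integrality. One small imprecision: on a surface of positive genus, face boundaries of a $2$-cell embedding need not be \emph{cycles} even for $2$-connected graphs; what you actually need (and what holds) is that each face's boundary \emph{walk} has length at least $4$, which follows because the graph is simple and bipartite (so walk lengths are even and at least $3$ would be forced to $4$), and because every vertex has degree at least $2$, ruling out walks of length $2$. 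For the upper bound you correctly identify that the content is Ringel's rotation-system construction and defer to the reference --- which is exactly what the paper itself does, so no ground is lost there; your parity remark (all-quadrilateral embeddings exist precisely when $(m-2)(n-2)\equiv 0 \pmod 4$) is consistent with the fact that equality in the Euler count forces every face to be a $4$-gon. In short: your lower bound is a genuine proof the paper does not supply, and your upper bound matches the paper's citation-level treatment.
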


\section{Some basic results} 
In this section we derive some results concerning the genus of the commuting graphs of finite groups which are not only of interest in their own right but also used extensively in the forthcoming sections.

In the study of the genus of a graph, the cycles in the graph play a crucial role. Therefore, determining whether or not the graph is acyclic can be considered as the first step in this direction. Even otherwise, whether or not a graph associated to a group has a triangle is a topic of substantial interest (see, for example \cite{fang}). Keeping this is mind, we begin the section with the following result which, in view of  \cite[Proposition 2.3]{aam}, also says that the commuting graph of a non-abelian group is acyclic if and only if its complement (that is, the non-commuting graph of the group) is planar. 
\begin{prop} \label{no3cyc} 
Let $G$ be a non-abelian group. Then, $\Gamma_c (G)$ has no 3-cycle if and only if $G$ is isomorphic to the symmetric group  $S_3$, the quaternion group $Q_8$,  or the dihedral group  $D_8$.
\end{prop}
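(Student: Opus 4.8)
The reverse implication is a direct check: in each of $S_3$, $Q_8$ and $D_8$ the commuting graph turns out to be a disjoint union of edges (together with three isolated vertices in the case of $S_3$), and such a graph has no $3$-cycle. So I would concentrate on the forward implication, assuming throughout that $\Gamma_c(G)$ is triangle-free, and recover the listed groups from scratch.

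The engine of the whole argument is the observation that abelian subgroups produce cliques: if $A\le G$ is abelian then $A\setminus Z(G)$ consists of pairwise-commuting non-central elements, so triangle-freeness forces $|A\setminus Z(G)|\le 2$. From this I would extract three consequences. First, for any non-central $x$ the coset $xZ(G)$ is itself a clique of size $|Z(G)|$, giving $|Z(G)|\le 2$ at once. Second, taking $A=\langle x\rangle$ and comparing $|\langle x\rangle|$ with the proper subgroup $\langle x\rangle\cap Z(G)$ shows that every non-central element has order $2$, $3$ or $4$, with $x^2\in Z(G)$ whenever $x$ has order $4$. Third, a finer coset count (using $xZ(G)\cup x^2Z(G)$ for an order-$3$ element, and $Z(G)\subseteq\langle x\rangle$ for an order-$4$ element) shows that an element of order $3$ forces $|Z(G)|=1$ while an element of order $4$ forces $|Z(G)|=2$; in particular $G$ cannot contain elements of order $3$ and of order $4$ simultaneously.

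The next step is to pin down centralizers. For a non-central $x$, the subgroup $\langle x\rangle Z(G)$ already contributes the two non-central neighbours $x$ and $x^{-1}$ (or $x$ and $xz$ when $x$ is an involution and $Z(G)=\{e,z\}$); any further non-central $w\in C_G(x)$ would commute with both of these and complete a triangle. Hence $C_G(x)\setminus Z(G)$ has exactly two elements and $|C_G(x)|=|Z(G)|+2$ for every non-central $x$. By Cauchy's theorem the only primes dividing $|G|$ are now $2$ and $3$, so the two regimes isolated above are exhaustive: either $|Z(G)|=1$ and $G$ has exponent dividing $6$ (elements of orders $1,2,3$), or $G$ is a non-abelian $2$-group with $|Z(G)|=2$ and elements of orders $1,2,4$.

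Finally I would feed the centralizer orders into the class equation. When $|Z(G)|=1$ the non-central classes have sizes $|G|/2$ and $|G|/3$, and writing the class equation as $|G|(6-2r-3s)=6$ (with $r,s\ge 1$ the numbers of classes of each type) forces $|G|=6$, whence $G\cong S_3$. When $|Z(G)|=2$ every non-central class has size $|G|/4$, and the class equation $|G|(4-k)=8$ forces $|G|=8$, whence $G\cong D_8$ or $Q_8$ (the abelian groups of order $8$ being excluded). I expect the main obstacle to lie in the order/centre bookkeeping rather than in the final count: one must verify cleanly that the regimes $|Z(G)|=1$ and $|Z(G)|=2$ cannot overlap, and that the centralizer of every non-central element really collapses to $\langle x\rangle Z(G)$. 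Once those structural facts are secured, the class-equation computations that identify $|G|$, and hence $G$ up to isomorphism, are short and forced.
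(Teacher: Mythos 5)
Your structural opening (coset cliques forcing $|Z(G)|\le 2$, cyclic subgroups bounding element orders, the dichotomy ``order $3$ forces trivial center, order $4$ forces $|Z(G)|=2$'') matches the paper, but your endgame has a genuine gap: the proposition is stated for an \emph{arbitrary} non-abelian group, with no finiteness hypothesis, and every concluding step of yours --- Cauchy's theorem to restrict the primes dividing $|G|$, and the class-equation identities $|G|(6-2r-3s)=6$ and $|G|(4-k)=8$ --- presupposes that $G$ is finite. Nothing you prove rules out an infinite group: your intermediate results only show that every element has order at most $4$ (resp.\ order dividing $6$), and bounded exponent together with tiny centralizers does not by itself force finiteness (free Burnside groups of large odd exponent are infinite, of finite exponent, and all their proper centralizers are finite cyclic, so a group of that general shape would pass all your structural tests). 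This is exactly where the paper spends its effort: in the $|Z(G)|=1$ case it invokes M.~Hall's theorem that groups of exponent dividing $6$ are locally finite, bounds every finitely generated subgroup by $6$ via a Sylow analysis, and only then concludes $G\cong S_3$; in the $|Z(G)|=2$ case it avoids the class equation entirely, noting that $G/Z(G)$ elementary abelian gives $G'\subseteq Z(G)$, hence $Cl_G(x)=\{x,x^3\}$ for an element $x$ of order $4$, hence $|G|=|C_G(x)|\cdot|Cl_G(x)|=4\cdot 2=8$ --- an argument valid without ever assuming $|G|<\infty$. To make your proof complete you would have to either import such local-finiteness theorems (Sanov for exponent $4$, Hall for exponent $6$) or switch to the paper's conjugacy-class computation in Case 2.

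A secondary slip: your blanket claim that $|C_G(x)|=|Z(G)|+2$ for every non-central $x$ is false when $x$ is an involution and $Z(G)$ is trivial, since then $\langle x\rangle Z(G)\setminus Z(G)=\{x\}$ is a single vertex; the correct conclusion there is $C_G(x)=\{1,x\}$, of order $|Z(G)|+1$ (in $S_3$, a transposition has centralizer of order $2$), the triangle being $\{x,w,xw\}$ for any further non-central $w\in C_G(x)$. Your final Case-1 computation quietly uses the correct class sizes $|G|/2$ and $|G|/3$, which contradicts your stated centralizer claim, so the bookkeeping needs to be split into the cases (involution with trivial center; involution with $|Z(G)|=2$; order $3$ or $4$). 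With that repair, and restricted to finite groups, your class-equation route is correct and is a legitimately different, arguably slicker, endgame than the paper's Sylow analysis --- but as written the proposal does not prove the proposition as stated, since the infinite case is untouched.
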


\begin{proof}
If $G$ is isomorphic to   $S_3$, $Q_8$  or  $D_8$, then it is easy to see that $\Gamma_c (G)$ has no 3-cycle; in fact, $\Gamma_c (G)$ is acyclic.

Conversely, suppose that $\Gamma_c (G)$ has no 3-cycle. Then, $|Z(G)| \leq 2$; otherwise, for all $x\in G \setminus Z(G)$, the induced subgraph $\Gamma_c (G)[xZ(G)] \cong K_{|Z(G)|}$ would contain a 3-cycle.

\noindent \textit{Case 1.} \ $|Z(G)| = 1$.

In this case, every element of $G$ has order 2 or 3; otherwise $\{x, x^2, x^3 \}$ would form a 3-cycle in $\Gamma_c (G)$ for all $x\in G$ with $o(x) >3$.  Therefore, $G$ is a group of exponent dividing 6. Let $H$ be a finitely generated subgroup of $G$.  Then, $H$ is a finite group (see \cite[Sections 18.2, 18.4]{mhall}), and so, we have  $|H|= 2^m 3^n$ for some non-negative integers $m$ and $n$. If $m\geq 2$, then the Sylow 2-subgroup $H_2$ of $H$ exists, and is elementary abelian, which implies that  the induced subgraph $\Gamma_c (G)[H_2 \setminus \{1\}] \cong K_{2^m -1}$ contains a 3-cycle.  Therefore, we have $m\leq 1$. Also, if $n\geq2$, then  the 3-Sylow subgroup $H_3$ of $H$ exists, and is abelian; otherwise, for all $x \in H_3 \setminus Z(H_3)$, the induced subgraph $\Gamma_c (G)[xZ(H_3)] \cong K_{|Z(H_3)|}$ would contain a 3-cycle (noting that $|Z(H_3)| \geq 3$). This, however, implies that  the induced subgraph $\Gamma_c (G)[H_3 \setminus \{1\}] \cong K_{3^n -1}$ contains a 3-cycle.  Therefore, we have $n\leq 1$. Thus, every finitely generated subgroup of $G$ is of order at most 6.  It follows that $G$ itself is of order not exceeding 6. Since $G$ non-abelian, we have  $G \cong S_3$.

\noindent \textit{Case 2.} \ $|Z(G)| = 2$.

In this case, $G/Z(G)$ is an elementary abelian 2-group; otherwise, for all $x\in G \setminus Z(G)$ with $x^2 \notin Z(G)$, the induced subgraph $\Gamma_c (G)[xZ(G)\sqcup x^2 Z(G)] \cong K_{2|Z(G)|}$ would contain a 3-cycle.  It follows that every element of $G$ is of order 2 or 4.  Since $G$ is non-abelian, there is an element $x \in G$ of order 4, and so, we have $Z(G) = \{1, x^2 \}$. It is easy to see that $C_G (x) = \langle x \rangle$; otherwise $\{x, x^3, w \}$ would form a 3-cycle in $\Gamma_c (G)$ for all $w \in C_G (x) \setminus \langle x \rangle$. Thus, $|C_G (x)|  = 4$. Let $z \in Cl_G (x) \setminus \{ x \}$.  Then,  $1 \neq zx^{-1} \in G' \subseteq Z(G)$. Therefore,  we have $zx^{-1}=x^2$, and so, $z = x^3$.  Thus, $Cl_G (x) = \{x, x^3 \}$, and so, $|G:C_G (x)|= |Cl_G (x)|=2$.  It follows that $|G|=8$. Since $G$ is non-abelian, we have $G \cong Q_8 \text{ or } D_8$  . This completes the proof.
\end{proof}
It follows, in particular,  from the above result that the girth of the commuting graph of a non-abelian group is 3 or $\infty$. Our next result is used not only in this section but also in the forthcoming sections.

\begin{prop}\label{bound} Let $G$ be a finite non-abelian group whose commuting graph has genus $g$, where $g$ is a non-negative integer.  Then the following assertions hold:  
\begin{enumerate}
\item   If $\emptyset \neq S \subseteq G \setminus Z(G)$ such that $xy=yx$ for all $x,y \in S$, then $|S|\leq \lfloor\frac{7+\sqrt{1+48g}}{2}\rfloor$. 
\item   $|Z(G)|\leq \frac{1}{t-1}\lfloor\frac{7+\sqrt{1+48g}}{2}\rfloor$, where $t = \max \{o(xZ(G))\mid xZ(G) \in G/Z(G)\}$.  
\item   If $A$ is an abelian subgroup of $G$, then $|A| \leq \lfloor\frac{7+\sqrt{1+48g}}{2}\rfloor +|A \cap Z(G)|$.
\end{enumerate}
 \end{prop}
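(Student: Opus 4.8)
The plan is to reduce all three parts to a single clique bound obtained by inverting the complete-graph genus formula of Lemma~\ref{kn}. Throughout I would write $N := \lfloor\frac{7+\sqrt{1+48g}}{2}\rfloor$ for the quantity in the statement, and record at the outset that $N \geq 4$, since $g \geq 0$ forces $\frac{7+\sqrt{1+48g}}{2} \geq 4$.

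First I would prove (a), which is the engine for the rest. Because the elements of $S$ are distinct, pairwise commuting, and all lie in $G \setminus Z(G)$, they form a clique in $\Gamma_c(G)$; hence $\Gamma_c(G)$ contains a copy of $K_{|S|}$ and therefore $\gamma(K_{|S|}) \leq \gamma(\Gamma_c(G)) = g$. If $|S| \leq 2$ the claim holds since $N \geq 4$, so I would assume $|S| \geq 3$. Then Lemma~\ref{kn} gives $\lceil (|S|-3)(|S|-4)/12\rceil \leq g$, and because $g$ is an integer this is equivalent to $(|S|-3)(|S|-4) \leq 12g$, that is, $|S|^2 - 7|S| + (12 - 12g) \leq 0$. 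Since the larger root of the associated quadratic equals $\frac{7+\sqrt{1+48g}}{2}$ and the leading coefficient is positive, I conclude $|S| \leq \frac{7+\sqrt{1+48g}}{2}$, and as $|S|$ is an integer, $|S| \leq N$.

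Next, for (b) I would exploit that $G$ non-abelian forces $G/Z(G)$ to be non-cyclic, so $t \geq 2$. Picking $x \in G$ with $o(xZ(G)) = t$ and setting $T := \langle x\rangle Z(G)$ produces an abelian subgroup with $Z(G) \leq T$ and $|T/Z(G)| = t$, whence $|T| = t\,|Z(G)|$. The non-central elements $S := T \setminus Z(G)$ then form a set of $(t-1)|Z(G)|$ pairwise commuting elements of $G \setminus Z(G)$, so (a) yields $(t-1)|Z(G)| \leq N$, which is the stated inequality. For (c) I would take $S := A \setminus Z(G) = A \setminus (A \cap Z(G))$, a set of pairwise commuting (since $A$ is abelian) non-central elements with $|S| = |A| - |A \cap Z(G)|$; the case $S = \emptyset$ (that is, $A \subseteq Z(G)$) being trivial, (a) gives $|A| - |A\cap Z(G)| \leq N$ in the remaining case, which rearranges to $|A| \leq N + |A\cap Z(G)|$.

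The only genuinely delicate step I expect is the inversion of the genus formula in (a): one must pass carefully from the ceiling inequality to the polynomial inequality, using that $g$ is an integer, and verify that the floor of the larger root is exactly the sharp integer bound (together with the small-case check $|S| \leq 2$). Everything else is routine once the right abelian configuration is exhibited and its non-central elements are counted.
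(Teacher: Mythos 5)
Your proposal is correct, and your parts (b) and (c) coincide with the paper's proof: the paper takes $S=\bigsqcup_{i=1}^{t-1} y^i Z(G)$ with $o(yZ(G))=t$, which is precisely your set $\langle x\rangle Z(G)\setminus Z(G)$, and for (c) it uses the same splitting $A=(A\setminus Z(G))\cup(A\cap Z(G))$. Part (a), however, is argued via a genuinely different key lemma. The paper goes through chromatic numbers: for $g>0$ it chains $|S|=\omega(\Gamma_c(G)[S])\le\omega(\Gamma_c(G))\le\chi(\Gamma_c(G))$ and then invokes Heawood's formula $\chi(\Gamma_c(G))\le\lfloor(7+\sqrt{1+48g})/2\rfloor$ from \cite[Theorem 6.3.25]{west}; since Heawood's bound does not cover the sphere (that case being the four-colour problem), the paper must treat $g=0$ separately, where it uses the non-planarity of $K_5$ to conclude $|S|\le 4$. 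You instead use genus monotonicity, $\gamma(K_{|S|})\le\gamma(\Gamma_c(G))=g$, and invert the complete-graph genus formula of Lemma~\ref{kn}: for integer $g$ the inequality $\lceil(|S|-3)(|S|-4)/12\rceil\le g$ is equivalent to $(|S|-3)(|S|-4)\le 12g$, whose larger root is exactly $(7+\sqrt{1+48g})/2$, whence $|S|\le\lfloor(7+\sqrt{1+48g})/2\rfloor$; your preliminary observations that $N\ge 4$ (disposing of $|S|\le 2$) and that Lemma~\ref{kn} applies for $|S|\ge 3$ (with $\gamma(K_3)=\gamma(K_4)=0$) make this airtight. What each approach buys: yours is uniform in $g$ with no planar case split, never mentions chromatic numbers, and relies only on a result the paper has already stated (indeed only on the lower-bound, Euler-formula half of Lemma~\ref{kn}); the paper's is shorter on the page, delegating all arithmetic to the cited Heawood bound, at the cost of the extra $g=0$ case.
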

\begin{proof}
Consider the induced subgraph $\Gamma_c (G)[S] \cong K_{|S|}$. If $g=0$, then $\gamma(K_{|S|}) = \gamma(\Gamma_c (G)[S])\leq \gamma(\Gamma_c (G))=0$, and so, it follows that $|S|\leq 4$. On the other hand, if $g>0$, then, by Heawood's formula \cite[Theorem 6.3.25]{west}, we have $|S| = \omega(\Gamma_c (G)[S]) \leq \omega(\Gamma_c (G)) \leq \chi(\Gamma_c (G)) \leq \lfloor \frac{7+\sqrt{1+48g}}{2}  \rfloor$. This proves (a). The remaining two assertions follow from (a); in fact, for (b) we take  $S= \overset{t-1}{\underset{i=1}\bigsqcup}   y^i Z(G)$, where $y \in G \setminus Z(G)$ such that $o(yZ(G))=t$, and for (c) we simply note that $A=(A\setminus Z(G))\cup(A\cap Z(G))$.
\end{proof}

Our third result of this section says that every collection of finite non-abelian groups whose commuting graphs have the same genus is finite.
\begin{thm}\label{fingen}
The order of a finite non-abelian group is bounded by a function of the genus of its commuting graph. Consequently, given a non-negative integer $g$,  there are at the most finitely many finite non-abelian groups whose commuting graphs  have genus $g$.
\end{thm}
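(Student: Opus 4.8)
The plan is to bound $|G|$ by a function of $g$; since there are only finitely many isomorphism types of groups of any given order, the ``consequently'' clause then follows at once. Write $B=\lfloor\frac{7+\sqrt{1+48g}}{2}\rfloor$ for the quantity in Proposition \ref{bound}. Because $G$ is non-abelian, $G/Z(G)$ is non-trivial, so the parameter $t=\max\{o(xZ(G))\}$ of Proposition \ref{bound}(b) satisfies $t\ge 2$; hence $|Z(G)|\le B$, and then Proposition \ref{bound}(c) gives that every abelian subgroup of $G$ has order at most $B+|Z(G)|\le 2B$. Since $|G|=|Z(G)|\,[G:Z(G)]$ and $|Z(G)|\le B$, everything reduces to the following purely group-theoretic assertion: a finite group in which \emph{every abelian subgroup has order at most $M:=2B$} has order bounded by a function of $M$ alone.

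To prove this I would invoke the generalized Fitting subgroup $F^{*}(G)=F(G)E(G)$, whose defining property is $C_G(F^{*}(G))\le F^{*}(G)$. The nilpotent part $F(G)$ is the direct product of its Sylow subgroups; since a $p$-group of order $p^{n}$ contains an abelian subgroup of order at least $p^{c\sqrt{n}}$ for an absolute constant $c>0$ (a classical fact), the hypothesis forces each Sylow subgroup of $F(G)$, as well as the number of primes dividing $|F(G)|$, to be bounded in terms of $M$, so $|F(G)|$ is bounded. The layer $E(G)$ is a central product of quasisimple components, and distinct components commute elementwise; hence the number of components is bounded in terms of $M$ (otherwise one would assemble arbitrarily large abelian subgroups from one element per component), while each component has bounded order because a non-abelian finite simple group all of whose abelian subgroups have order at most $M$ is itself of bounded order. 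Thus $|F^{*}(G)|$ is bounded, and since $G/C_G(F^{*}(G))$ embeds in $\mathrm{Aut}(F^{*}(G))$ together with $C_G(F^{*}(G))\le F^{*}(G)$, we obtain $|G|\le |F^{*}(G)|\cdot|\mathrm{Aut}(F^{*}(G))|$, bounded in terms of $M$ and hence of $g$.

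The main obstacle is precisely the simple-group input in the previous paragraph: that a finite simple group with all abelian subgroups of bounded order is of bounded order. For solvable $G$ this difficulty evaporates, since then $E(G)=1$, $C_G(F(G))\le F(G)$, and the elementary bound on $p$-groups already finishes the argument; the general case, however, appears to require the classification of finite simple groups, via the observation that the order of the largest abelian subgroup of a finite simple group tends to infinity with the order of the group. I would therefore isolate this fact as a separate lemma and cite the classification for it, leaving the remainder self-contained.

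Finally, I would record the elementary partial result that disposes of the case of a large centre without any simple-group machinery, as it makes transparent how the genus enters. The $[G:Z(G)]-1$ non-central cosets $xZ(G)$ are pairwise disjoint, and each is a clique of $\Gamma_c(G)$ isomorphic to $K_{|Z(G)|}$. Their disjoint union $\Delta$ is a subgraph of $\Gamma_c(G)$, so $\gamma(\Delta)\le g$, while the additivity of the genus over blocks \cite[Corollary 1]{bhky} and Lemma \ref{kn} give
\[
g\ \ge\ \gamma(\Delta)\ =\ \big([G:Z(G)]-1\big)\,\gamma\!\left(K_{|Z(G)|}\right).
\]
When $|Z(G)|\ge 5$ one has $\gamma(K_{|Z(G)|})\ge 1$, whence $[G:Z(G)]\le g+1$ directly; only the range $|Z(G)|\le 4$, where these coset cliques are too small to register on the genus, genuinely needs the structural argument above.
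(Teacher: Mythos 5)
Your proposal is correct in substance, but it takes a far heavier route than the paper, and your claim that the general case ``appears to require the classification of finite simple groups'' is mistaken --- that is the point worth dwelling on. Your opening reduction (every abelian subgroup of $G$ has order at most $2B$, so it suffices to bound the order of a finite group by the maximal order of its abelian subgroups) is exactly the paper's starting point, and your treatment of $F(G)$ via Burnside's theorem on abelian subgroups of $p$-groups is exactly the right ingredient. What you missed is that this ingredient already finishes the job for $G$ itself, with no structure theory at all: the order of \emph{any} finite group is the product of the orders of its Sylow subgroups, one for each prime divisor. The paper's proof is precisely this. If $P$ is a Sylow $p$-subgroup of $G$ with $|P|=p^n$ and $P$ is non-abelian, Burnside's result \cite[Section I, Para 4]{burn} gives an abelian subgroup of $P$ of order $p^v$ with $n<(2v+1)^2$, and Proposition \ref{bound}(c) gives $p^v\leq 2h$, whence $p<2h$, $v<2h$ and $|P|<(2h)^{(4h+1)^2}$ (the abelian or central cases being trivial); since the number of primes less than $2h$ is at most $h$, multiplying over the prime divisors yields $|G|<(2h)^{h(4h+1)^2}$. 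In particular, the simple-group lemma you isolate as the ``main obstacle'' is itself a corollary of this same elementary argument applied to the simple group, so neither $F^{*}(G)$, nor Bender's theorem $C_G(F^{*}(G))\leq F^{*}(G)$, nor the classification is needed; your Fitting-subgroup decomposition is valid (and your component-counting and automorphism steps can be made rigorous), but it buys nothing here except worse bounds and an enormous external dependency. Your closing observation --- that the disjoint central-coset cliques force $[G:Z(G)]-1\leq g$ once $|Z(G)|\geq 5$, via \cite[Corollary 1]{bhky} and Lemma \ref{kn} --- is correct and pleasant, but the case distinction it is designed to simplify never arises in the elementary argument.
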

\begin{proof} Let $G$ be a finite non-abelian group whose commuting graph  has  genus $g$. Let us put $h=\lfloor\frac{7+\sqrt{1+48g}}{2}\rfloor$.   Then, by Proposition \ref{bound}(a),  we have $|Z(G)| \leq h$. Let $p$ be a prime divisor of $|G|$, and $P$ be a Sylow $p$-subgroup of $G$ with $|P|=p^n$, where $n$ is a positive integer.  If $P \subset Z(G)$, then $|P| \leq h$.  So, let $P \nsubseteq Z(G)$. If $P$ is abelian, then, by Proposition \ref{bound}, we have $|P \setminus Z(G)| \leq h$, and hence, $|P| \leq 2h$. So, we assume that $P$ is non-abelian.  Then, $|Z(P)|=p^c$ for some positive integer $c<n$, and, by   \cite[Section I, Para 4]{burn}, $P$ has an abelian subgroup $A$ of order $p^v$,  where $v$ is a positive integer such that $v \geq -\frac{1}{2} + \sqrt{2n+c^2-c+\frac{1}{4}}$; in particular, $n<(2v+1)^2$.  By Proposition \ref{bound}, we have $p^v=|A| \leq 2h$; in particular, $v< 2h$ and $p < 2h$. Hence, it follows that $|P| = p^n < (2h)^{(4h+1)^2}$. Since the number of primes less than $2h$ is at most $h$, we have $|G|< (2h)^{h(4h+1)^2}$. This completes the proof.
\end{proof}

Recall that a group is said to be an {\em AC-group} if the centralizer of each of its non-central elements is abelian. The $AC$-groups have been extensively studied by many authors (see, for example, \cite{sch}, \cite{roc}, \cite{aam}).  Our final result of this section deals with finite non-abelian $AC$-groups.

\begin{prop}\label{prop-ac}
 Let $G$ be a finite non-abelian $AC$-group. Then 
 \[ {\textstyle
 \gamma(\Gamma_c (G)) = \underset{X \in \mathcal{P}} \sum \gamma(K_{|X|})},
 \]
where $\mathcal{P}=\{C_G(u)\setminus Z(G) \mid u\in G\setminus Z(G)\}$.  \end{prop}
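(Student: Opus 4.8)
The plan is to show that, for a finite non-abelian $AC$-group $G$, the commuting graph $\Gamma_c(G)$ decomposes as a disjoint union of complete graphs indexed by $\mathcal{P}$; once this is established, the desired formula follows at once from the additivity of the genus over blocks proved in \cite[Corollary 1]{bhky}.

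The first and central step is a structural lemma: for any $u \in G \setminus Z(G)$ and any $w \in C_G(u) \setminus Z(G)$, one has $C_G(w) = C_G(u)$. I would prove this by invoking the $AC$-hypothesis twice. Since $w \in C_G(u)$ and $C_G(u)$ is abelian, every element of $C_G(u)$ commutes with $w$, giving $C_G(u) \subseteq C_G(w)$. Conversely, $u \in C_G(w)$, and as $w$ is non-central, $C_G(w)$ is abelian, so every element of $C_G(w)$ commutes with $u$, giving $C_G(w) \subseteq C_G(u)$; hence the two centralizers coincide. This immediately shows that the members of $\mathcal{P}$, namely the sets $C_G(u) \setminus Z(G)$, form a \emph{partition} of $G \setminus Z(G)$: each non-central $u$ lies in $C_G(u) \setminus Z(G)$, and if two such sets share a non-central element $w$, then both equal $C_G(w) \setminus Z(G)$.

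Next I would analyse the edges of $\Gamma_c(G)$ relative to this partition. Within a single part $X = C_G(u) \setminus Z(G) \in \mathcal{P}$, all elements commute because $C_G(u)$ is abelian, so the induced subgraph $\Gamma_c(G)[X]$ is the complete graph $K_{|X|}$. Between two distinct members $X_1 = C_G(u) \setminus Z(G)$ and $X_2 = C_G(v) \setminus Z(G)$ there can be no edge: if $x \in X_1$ and $y \in X_2$ were to commute, then, applying the structural lemma to $x \in C_G(u) \setminus Z(G)$, we would have $y \in C_G(x) = C_G(u)$, and since $y \notin Z(G)$ this forces $y \in X_1 \cap X_2 = \emptyset$, a contradiction. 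Consequently $\Gamma_c(G)$ is the disjoint union $\bigsqcup_{X \in \mathcal{P}} K_{|X|}$, the various $K_{|X|}$ being precisely its connected components.

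Finally, since distinct connected components lie in distinct blocks, the genus is additive over the members of $\mathcal{P}$ by \cite[Corollary 1]{bhky}, yielding $\gamma(\Gamma_c(G)) = \sum_{X \in \mathcal{P}} \gamma(K_{|X|})$, with each term computable through Lemma \ref{kn}. I expect the only genuine subtlety to be the careful verification of the structural lemma together with the resulting partition, and in particular arguing the ``no edge between distinct parts'' claim rigorously from the $AC$-property rather than assuming it; once the disjoint-union structure is in hand, the genus computation is a direct appeal to the block-additivity result.
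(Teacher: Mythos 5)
Your proof is correct and follows essentially the same route as the paper: both establish that $\Gamma_c(G)$ decomposes into the complete graphs $K_{|X|}$, $X \in \mathcal{P}$ (each part inducing a complete subgraph, with no edges between distinct parts), and then invoke the additivity of genus over blocks from \cite[Corollary 1]{bhky}. The only difference is that you prove the key fact---that centralizers of commuting non-central elements coincide---directly from the $AC$-hypothesis, whereas the paper cites \cite[Lemma 3.2]{roc} for the equivalent transitivity statement; your version is self-contained but mathematically the same argument.
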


\begin{proof}
Let $X \in \mathcal{P}$. Then, $X = C_G(u) \setminus Z(G)$  for some $u \in G$. If $x,y \in X$  such that $x\neq y$, then $[x,u]=[y,u]=1$, and so, by \cite[Lemma 3.2]{roc}, we have $[x,y]=1$.  Also, if $x \in X$ and $y \in G\setminus Z(G)$ such that $[x,y]=1$, then,  by \cite[Lemma 3.2]{roc}, we have $[y,u]=1$, and so, $y \in X$.  It follows that the  induced subgraph $\Gamma_c (G)[X]\cong K_{|X|}$ is a block of $\Gamma_c (G)$, and, since $G\setminus Z(G) = \underset{X \in \mathcal{P}}{\cup}X$, the collection $\{\Gamma_c (G)[X] \mid X\in \mathcal{P}\}$ is the block decomposition of $\Gamma_c (G)$. Therefore, by   \cite[Corollary 1]{bhky}, we have  $\gamma(\Gamma_c (G)) =  {\underset{X \in \mathcal{P}}\sum} \gamma(K_{|X|})$.
\end{proof}

\begin{rem}\label{rem-ac}
If $G$ is a finite non-abelian $AC$-group and $A$ is a finite abelian group, then $A \times G$ is also a finite non-abelian $AC$-group with  $C_{A \times G} (a,u) \setminus Z(A \times G) =$ $A \times (C_G(u)\setminus Z(G))$ for all $(a,u) \in (A \times G) \setminus Z(A \times G)$.  Therefore, it follows from Proposition \ref{prop-ac} that 
\[ \textstyle{
\gamma(\Gamma_c (A \times G)) = \underset{X \in \mathcal{P}}\sum \gamma(K_{|A||X|})},
\]
where $\mathcal{P}=\{C_G(u)\setminus Z(G) \mid u\in G\setminus Z(G)\}$.
\end{rem}

\section{Genus of the commuting graphs of some well-known $AC$-groups} 

In this section, we determine the genus of the commuting graphs of some well-known finite non-abelian $AC$-groups. Some of the results obtained here play crucial role in the study of planarity and toroidality of the commuting graphs of finite non-abelian groups.

\begin{prop} \label{dihed} The genus of the commuting graph of the dihedral group $D_{2n}=\langle x,y \mid y^n=x^2=1, xyx^{-1}=y^{-1}\rangle$, where  $n \geq 3$, is given by
\[ \gamma(\Gamma_c (D_{2n})) = \begin{cases} 
      \gamma(K_{n-2}) & \textrm{ if $n$ is even,} \\
      \gamma(K_{n-1}) & \textrm{ if $n$ is odd.} \\
   \end{cases} \]
\end{prop}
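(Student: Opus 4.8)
The plan is to exhibit $D_{2n}$ as a finite non-abelian AC-group and then read the genus directly off Proposition~\ref{prop-ac}. First I would record the center: writing the rotation subgroup as $\langle y\rangle$ and the reflections as $xy^0,\dots,xy^{n-1}$, one has $Z(D_{2n})=\{1\}$ when $n$ is odd and $Z(D_{2n})=\{1,y^{n/2}\}$ when $n$ is even. Next I would verify the AC-property by computing centralizers of non-central elements. The centralizer of any non-central rotation $y^i$ is the abelian subgroup $\langle y\rangle$, and the centralizer of a reflection will turn out to be a group of order $2$ (when $n$ is odd) or a Klein four-group (when $n$ is even); since all of these are abelian, $D_{2n}$ is an AC-group and Proposition~\ref{prop-ac} applies.

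The heart of the argument is to pin down the family $\mathcal{P}=\{C_G(u)\setminus Z(G)\mid u\in G\setminus Z(G)\}$. Using the relation $y^{i}x=xy^{-i}$, a short calculation shows that $xy^{j}$ commutes with $y^{i}$ if and only if $y^{2i}=1$, and that $xy^{j}$ commutes with $xy^{k}$ if and only if $y^{2(k-j)}=1$. From the first condition, every non-central rotation has centralizer exactly $\langle y\rangle$, contributing a single set to $\mathcal{P}$. The second condition controls how the reflections group together: when $n$ is odd no two distinct reflections commute, whereas when $n$ is even the reflections $xy^{j}$ and $xy^{j+n/2}$ (and only these) commute and share a common centralizer.

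With $\mathcal{P}$ in hand I would split into cases. For $n$ odd, deleting the trivial center leaves one block $\langle y\rangle\setminus\{1\}$ of size $n-1$ from the rotations and $n$ singleton blocks $\{xy^{j}\}$ from the reflections, so $\mathcal{P}$ consists of one set of size $n-1$ and $n$ sets of size $1$. For $n$ even, deleting $Z=\{1,y^{n/2}\}$ leaves one block $\langle y\rangle\setminus Z$ of size $n-2$ from the rotations, together with $n/2$ blocks of size $2$, each of the form $\{xy^{j},xy^{j+n/2}\}$. The care required here is exactly that $\mathcal{P}$ is a set of \emph{distinct} centralizer-sets, so the $n$ reflections must be collapsed into $n/2$ pairs rather than counted $n$ times; this bookkeeping in the even case is the only genuine obstacle.

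Finally I would substitute into Proposition~\ref{prop-ac}. Since $\gamma(K_{1})=\gamma(K_{2})=0$, the singleton blocks in the odd case and the size-$2$ blocks in the even case contribute nothing to the sum $\sum_{X\in\mathcal{P}}\gamma(K_{|X|})$. What remains is the single large block, giving $\gamma(\Gamma_c(D_{2n}))=\gamma(K_{n-1})$ when $n$ is odd and $\gamma(\Gamma_c(D_{2n}))=\gamma(K_{n-2})$ when $n$ is even, which is precisely the claimed formula.
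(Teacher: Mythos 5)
Your proposal is correct and follows essentially the same route as the paper: identify $D_{2n}$ as an AC-group, compute the centralizers of non-central rotations and reflections (with the center splitting into the even/odd cases), collect the distinct centralizer-sets, and apply Proposition~\ref{prop-ac}, noting that $\gamma(K_1)=\gamma(K_2)=0$ kills the reflection blocks. Your explicit commutation criteria ($xy^j$ commutes with $y^i$ iff $y^{2i}=1$, etc.) and the remark about collapsing the $n$ reflections into $n/2$ shared centralizers in the even case are just a more detailed writing-out of the same computation the paper records directly.
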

\begin{proof}
Note that $D_{2n}$ is a non-abelian $AC$-group. If $n$ is even, then  $Z(D_{2n}) = \{1,y^\frac{n}{2}\}$, $C_{D_{2n}}(y^i) = \langle y \rangle$ for $1\leq i \leq n-1$  ($i \neq \frac{n}{2}$),  and $C_{D_{2n}}(xy^j)=\{1,xy^j,y^\frac{n}{2},xy^{j+\frac{n}{2}}\}$  for $0\leq j \leq n-1$. If $n$ is odd, then  $Z(D_{2n}) = \{1\}$, $C_{D_{2n}}(y^i) = \langle y \rangle$  for $1\leq i \leq n-1$, and $C_{D_{2n}}(xy^j)=\{1,xy^j\}$ for $0\leq j \leq n-1$. Thus, if $n$ is even, the distinct centralizers of the non-central elements in $D_{2n}$ are $\langle y \rangle$ and $\{1,xy^j,y^\frac{n}{2},xy^{j+\frac{n}{2}}\}$,  where $0\leq j \leq \frac{n}{2}-1$, and so, by Proposition \ref{prop-ac}, we have $\gamma(\Gamma_c (D_{2n})) = \gamma(K_{n-2}) + \frac{n}{2}\gamma(K_2)= \gamma(K_{n-2})$.  On the other hand, if $n$ is odd, the distinct centralizers in $D_{2n}$ are $\langle y \rangle$ and $\{1,xy^j \}$, where  $0\leq j \leq n-1$, and so, by Proposition \ref{prop-ac}, we have $\gamma(\Gamma_c (D_{2n})) = \gamma(K_{n-1}) + n\gamma(K_1)= \gamma(K_{n-1})$. 
\end{proof}

\begin{prop} \label{quat} The genus of the commuting graph of the dicyclic group or the generalized quaternion group  $Q_{4n} = \langle x,y \mid y^{2n} = 1, x^2=y^n,xyx^{-1} = y^{-1}\rangle$, where  $n \geq 2$,  is given by  
\[\gamma(\Gamma_c (Q_{4n})) = \gamma(K_{2(n-1)}).\]	
\end{prop}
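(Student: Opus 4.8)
The plan is to mimic the proof of Proposition \ref{dihed}: determine the centralizer structure of $Q_{4n}$ explicitly, verify that $Q_{4n}$ is a non-abelian $AC$-group, and then feed the data into Proposition \ref{prop-ac}. First I would record the computational consequences of the defining relations. From $xyx^{-1}=y^{-1}$ one obtains $y^k x = x y^{-k}$ for every integer $k$, and hence $(xy^j)^2 = x(y^j x)y^j = x(xy^{-j})y^j = x^2 = y^n$ for every $j$; since $n\geq 2$ this shows each $xy^j$ has order $4$. Next I would locate the center: a power $y^i$ is centralized by $x$ exactly when $y^{2i}=1$, i.e.\ when $i\equiv 0\pmod n$, while the relation $y(xy^j)y^{-1}=xy^{j-2}$ shows no $xy^j$ commutes with $y$ when $n\geq 2$. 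Therefore $Z(Q_{4n})=\{1,y^n\}$, of order $2$.

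I would then compute the centralizers of the non-central elements. For $y^i\notin Z(Q_{4n})$ the abelian subgroup $\langle y\rangle$ is contained in $C_{Q_{4n}}(y^i)$, and since conjugation by any $xy^j$ sends $y^i$ to $y^{-i}\neq y^i$, no element outside $\langle y\rangle$ qualifies; hence $C_{Q_{4n}}(y^i)=\langle y\rangle$, of order $2n$. For the elements $xy^j$, a direct computation with $y^k x = xy^{-k}$ gives $(xy^j)(xy^l)=y^{n+l-j}$, so $xy^j$ and $xy^l$ commute if and only if $l\equiv j\pmod n$. Consequently $C_{Q_{4n}}(xy^j)=\{1,y^n,xy^j,xy^{j+n}\}$, a cyclic group of order $4$ generated by $xy^j$.

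Since $\langle y\rangle$ and each order-$4$ centralizer are cyclic, all centralizers of non-central elements are abelian, so $Q_{4n}$ is a non-abelian $AC$-group and Proposition \ref{prop-ac} is applicable. The distinct centralizers are $\langle y\rangle$ together with the $n$ subgroups $\{1,y^n,xy^j,xy^{j+n}\}$ for $0\leq j\leq n-1$ (the dependence on $j$ being only modulo $n$). Deleting $Z(Q_{4n})=\{1,y^n\}$ leaves $2n-2=2(n-1)$ vertices from $\langle y\rangle$ and exactly two vertices from each order-$4$ centralizer, so Proposition \ref{prop-ac} yields $\gamma(\Gamma_c(Q_{4n}))=\gamma(K_{2(n-1)})+n\,\gamma(K_2)=\gamma(K_{2(n-1)})$, using $\gamma(K_2)=0$.

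There is no deep obstacle here; the argument is a direct centralizer computation funneling into Proposition \ref{prop-ac}. The one place requiring care is the bookkeeping for the elements $xy^j$: one must verify both that $C_{Q_{4n}}(xy^j)$ depends only on $j\bmod n$, so that precisely $n$ such centralizers occur, and that each is abelian, so that the block-decomposition hypothesis of Proposition \ref{prop-ac} genuinely holds.
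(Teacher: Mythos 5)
Your proposal is correct and follows essentially the same route as the paper: the paper simply quotes the centralizer structure of $Q_{4n}$ as well-known ($Z(Q_{4n})=\{1,y^n\}$, $C_{Q_{4n}}(y^i)=\langle y\rangle$, $C_{Q_{4n}}(xy^j)=\{1,xy^j,y^n,xy^{j+n}\}$) and then applies Proposition \ref{prop-ac} to get $\gamma(K_{2(n-1)})+n\gamma(K_2)=\gamma(K_{2(n-1)})$, exactly as you do. The only difference is that you verify the centralizer computations explicitly rather than citing them, which is a harmless (indeed welcome) addition.
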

\begin{proof}
It is well-known that $Q_{4n}$ is a non-abelian $AC$-group with $Z(Q_{4n}) = \{1,y^n\}$, $C_{Q_{4n}}(y^i) = \langle y \rangle$ for  $1\leq i \leq 2n-1$ ($i \neq n$), and $C_{Q_{4n}}(xy^j) = \{1,xy^j,y^n,xy^{j+n}\}$  for  $0 \leq j \leq 2n-1$.  Therefore, the distinct centralizers of the non-central elements in $Q_{4n}$ are $\langle y \rangle$ and $\{1,xy^j,y^n,xy^{j+n}\}$, where  $0\leq j \leq n-1$, and so, by Proposition \ref{prop-ac}, we have $\gamma(\Gamma_c (Q_{4n})) = \gamma(K_{2(n-1)}) + n\gamma(K_2)= \gamma(K_{2(n-1)})$.  
\end{proof}

\begin{prop} \label{sdih}  
The genus of the commuting graph of the semidihedral group $SD_{2^n}$ $= \langle r,s \mid r^{2^{n-1}} = s^2 = 1,srs = r^{2^{n-2}-1}\rangle$, where $n \geq 4$, is given by 
\[\gamma(\Gamma_c (SD_{2^n})) = \gamma(K_{2^{n-1}-2}).\]
\end{prop}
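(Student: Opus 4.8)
The plan is to treat $SD_{2^n}$ exactly as the dihedral and dicyclic groups were treated in Propositions \ref{dihed} and \ref{quat}: establish that it is a finite non-abelian $AC$-group, list the distinct centralizers of its non-central elements together with their orders, and then read off the genus from Proposition \ref{prop-ac}. Every element of $SD_{2^n}$ is uniquely of the form $r^i$ or $sr^i$ with $0 \le i \le 2^{n-1}-1$, so the whole computation reduces to bookkeeping governed by the defining relation $srs = r^{2^{n-2}-1}$.

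First I would record the basic commutation rule. From $srs = r^{2^{n-2}-1}$ and $s^2 = 1$ one gets, by induction, $s r^i = r^{i(2^{n-2}-1)} s$ and hence $r^i s = s r^{i(2^{n-2}-1)}$ for every $i$. The repeatedly used key fact is that $2^{n-2}-2 = 2(2^{n-3}-1)$ with $2^{n-3}-1$ odd (as $n \ge 4$), so the congruence $i(2^{n-2}-2) \equiv 0 \pmod{2^{n-1}}$ is equivalent to $i \equiv 0 \pmod{2^{n-2}}$. Applying this to the condition $s r^i s = r^i$ shows that the only central powers of $r$ are $1$ and $r^{2^{n-2}}$, while the same congruence applied to $r \cdot sr^j = sr^j \cdot r$ shows that no reflection $sr^j$ is central; hence $Z(SD_{2^n}) = \{1, r^{2^{n-2}}\}$, of order $2$.

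Next I would compute the centralizers of non-central elements. For a non-central power $r^i$ (that is, $1 \le i \le 2^{n-1}-1$ with $i \neq 2^{n-2}$), a direct check using the commutation rule shows that $sr^j$ commutes with $r^i$ only when $i \equiv 0 \pmod{2^{n-2}}$, which is excluded; hence $C(r^i) = \langle r \rangle$, of order $2^{n-1}$. For a reflection $sr^j$, the same congruence analysis shows that $r^i$ commutes with $sr^j$ only for $i \in \{0, 2^{n-2}\}$, and that $sr^k$ commutes with $sr^j$ only for $k \equiv j \pmod{2^{n-2}}$; consequently $C(sr^j) = \{1, r^{2^{n-2}}, sr^j, sr^{j+2^{n-2}}\}$, an abelian subgroup of order $4$. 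Since $\langle r \rangle$ and all these order-$4$ subgroups are abelian, this computation simultaneously confirms that $SD_{2^n}$ is an $AC$-group.

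Finally I would assemble the block decomposition via Proposition \ref{prop-ac}. The distinct centralizers of non-central elements are $\langle r \rangle$ and the $2^{n-2}$ subgroups $\{1, r^{2^{n-2}}, sr^j, sr^{j+2^{n-2}}\}$ for $0 \le j \le 2^{n-2}-1$ (the indices $j$ and $j + 2^{n-2}$ yielding the same subgroup). Removing $Z(SD_{2^n})$ leaves $|\langle r \rangle \setminus Z| = 2^{n-1}-2$ and $|C(sr^j) \setminus Z| = 2$, so Proposition \ref{prop-ac} gives $\gamma(\Gamma_c(SD_{2^n})) = \gamma(K_{2^{n-1}-2}) + 2^{n-2}\,\gamma(K_2) = \gamma(K_{2^{n-1}-2})$, since $\gamma(K_2) = 0$. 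I expect the only delicate point to be the congruence arithmetic modulo $2^{n-1}$ that isolates $Z(SD_{2^n})$ and pins down the centralizers; once the reduction $i(2^{n-2}-2) \equiv 0 \pmod{2^{n-1}} \Leftrightarrow i \equiv 0 \pmod{2^{n-2}}$ is in hand, the remainder is routine and mirrors the dihedral case.
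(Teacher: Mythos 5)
Your proposal is correct and follows essentially the same route as the paper: identify $SD_{2^n}$ as an $AC$-group, list the distinct centralizers of non-central elements (namely $\langle r \rangle$ and the $2^{n-2}$ subgroups $\{1, r^{2^{n-2}}, sr^j, sr^{j+2^{n-2}}\}$), and apply Proposition \ref{prop-ac} to get $\gamma(K_{2^{n-1}-2}) + 2^{n-2}\gamma(K_2) = \gamma(K_{2^{n-1}-2})$. The only difference is that the paper asserts the centralizer structure as known, whereas you verify it explicitly via the congruence $i(2^{n-2}-2)\equiv 0 \pmod{2^{n-1}} \Leftrightarrow i \equiv 0 \pmod{2^{n-2}}$, and that arithmetic is correct.
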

\begin{proof}
$SD_{2^n}$ is a non-abelian $AC$-group with $Z(SD_{2^n}) = \{ 1, r^{2^{n-2}}\}$,  $C_{SD_{2^n}}(r^i) = \langle r \rangle$, for $1\leq i \leq 2^{n-1}-1$ ($i \neq 2^{n-2}$), and   $C_{SD_{2^n}}(sr^j) = \{1,sr^j,r^{2^{n-2}},sr^{j+2^{n-2}}\}$  for $0 \leq j \leq 2^{n-1}-1$.  Therefore, the distinct centralizers of the non-central elements in $SD_{2^n}$ are $\langle r \rangle$ and 
$\{1,sr^j,r^{2^{n-2}},sr^{j+2^{n-2}}\}$, where $0 \leq j \leq 2^{n-2}-1$,
and so, by Proposition \ref{prop-ac}, we have $\gamma(\Gamma_c (SD_{2^n})) = \gamma(K_{2^{n-1}-2}) + 2^{n-2}\gamma(K_2)= \gamma(K_{2^{n-1}-2})$.  
\end{proof}

\begin{prop}\label{nonpq}  The genus of the commuting graph of a non-abelian group $G$ of order $pq$, where $p$ and $q$ are primes with  $p \mid q-1$, is given by
\[\gamma(\Gamma_c (G)) = \gamma(K_{q-1}) + q \gamma(K_{p-1}).\]
\end{prop}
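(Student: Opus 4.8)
The plan is to reduce everything to the centralizer structure of $G$ and then invoke Proposition \ref{prop-ac}, since a group of order $pq$ with $p,q$ distinct primes is a non-abelian $AC$-group. First I would recall the Sylow-theoretic description of $G$. The hypothesis $p \mid q-1$ forces $p < q$, so the number $n_q$ of Sylow $q$-subgroups divides $p$ and satisfies $n_q \equiv 1 \pmod{q}$, whence $n_q = 1$; thus $G$ has a unique (hence normal) cyclic subgroup $Q = \langle y \rangle$ of order $q$. On the other hand, since $G$ is non-abelian we have $n_p \neq 1$, and as $n_p \mid q$ with $n_p \equiv 1 \pmod{p}$ this gives $n_p = q$. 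So there are exactly $q$ Sylow $p$-subgroups, each cyclic of order $p$, and any two of them meet only in the identity.

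Next I would pin down the center and the centralizers of non-central elements. Since $G/Z(G)$ cannot be cyclic for non-abelian $G$, while $|Z(G)| \in \{1,p,q\}$ would force $G/Z(G)$ cyclic, we must have $Z(G) = \{1\}$. For a non-identity element $u \in Q$, the cyclic group $Q$ lies in $C_G(u)$ and $C_G(u) \neq G$ (otherwise $u$ is central); as $|C_G(u)| \mid pq$, this forces $C_G(u) = Q$. Likewise, for a non-identity element $v$ contained in a Sylow $p$-subgroup $P$, we have $P \subseteq C_G(v) \subsetneq G$, whence $C_G(v) = P$. In particular every non-central element has an abelian centralizer, confirming that $G$ is an $AC$-group, and the distinct centralizers of the non-central elements are precisely $Q$ together with the $q$ Sylow $p$-subgroups.

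Finally I would assemble the family $\mathcal{P} = \{C_G(u)\setminus Z(G) \mid u \in G\setminus Z(G)\}$ appearing in Proposition \ref{prop-ac}. Since $Z(G) = \{1\}$, this family consists of the single set $Q \setminus \{1\}$ of size $q-1$, together with the $q$ sets $P \setminus \{1\}$, each of size $p-1$. Applying Proposition \ref{prop-ac} then yields $\gamma(\Gamma_c(G)) = \gamma(K_{q-1}) + q\,\gamma(K_{p-1})$, as claimed. There is no genuine obstacle here beyond routine Sylow bookkeeping; the only points demanding a little care are verifying that the $q$ Sylow $p$-subgroups really are distinct members of $\mathcal{P}$ (so that the coefficient $q$ is correct) and that they, together with $Q$, exhaust $G \setminus \{1\}$. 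The latter is confirmed by the count $q(p-1) + (q-1) = pq - 1$.
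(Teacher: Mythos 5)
Your proposal is correct and follows exactly the same route as the paper's proof: establish that $G$ is an $AC$-group with trivial center whose non-central centralizers are precisely the Sylow subgroups (one Sylow $q$-subgroup and $q$ Sylow $p$-subgroups), then apply Proposition \ref{prop-ac}. The paper states these structural facts without proof, while you supply the routine Sylow-theoretic verification; the substance is identical.
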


\begin{proof}
Note that $G$ is an $AC$-group with $|Z(G)|= 1$, in which the centralizers of the non-central elements are precisely the Sylow subgroups of $G$, and so, the result follows from Proposition \ref{prop-ac}. 
\end{proof}

\begin{prop}\label{nonp3}  The genus of the commuting graph of a non-abelian group $G$ of order $p^3$, where $p$ is a prime, is given by
\[\gamma(\Gamma_c (G)) = (p+1)\gamma(K_{p(p-1)}).\]
\end{prop}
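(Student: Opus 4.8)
The plan is to reduce everything to Proposition \ref{prop-ac}, so the first task is to verify that $G$ is a non-abelian $AC$-group and to pin down the centralizers of its non-central elements. Since $G$ is a $p$-group, its center is non-trivial; and $|Z(G)| = p^2$ is impossible, since that would force $G/Z(G)$ to be cyclic and hence $G$ abelian. Thus $|Z(G)| = p$ and $G/Z(G)$ is a group of order $p^2$ which, not being cyclic, is elementary abelian, i.e. $G/Z(G) \cong \mathbb{Z}_p \times \mathbb{Z}_p$. For any $u \in G \setminus Z(G)$, the centralizer $C_G(u)$ contains $Z(G)$ together with $u \notin Z(G)$, so $|C_G(u)| \geq p^2$; but $C_G(u) \neq G$ because $u$ is non-central, whence $|C_G(u)| = p^2$. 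A group of order $p^2$ is abelian, so $G$ is a non-abelian $AC$-group and $|C_G(u) \setminus Z(G)| = p^2 - p = p(p-1)$ for every non-central $u$.

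Next I would count the distinct sets in the family $\mathcal{P} = \{C_G(u) \setminus Z(G) \mid u \in G \setminus Z(G)\}$ appearing in Proposition \ref{prop-ac}. Each $C_G(u)$ is an abelian subgroup of order $p^2$ containing $Z(G)$, that is, a maximal subgroup of $G$ lying above $Z(G)$; conversely, any subgroup $M$ of order $p^2$ containing $Z(G)$ is abelian and satisfies $M = C_G(u)$ for each $u \in M \setminus Z(G)$, since then $M \subseteq C_G(u)$ while $|C_G(u)| = p^2 = |M|$. Hence the distinct centralizers of non-central elements are precisely the subgroups of order $p^2$ containing $Z(G)$, and these correspond bijectively, via the correspondence theorem, to the subgroups of order $p$ in $G/Z(G) \cong \mathbb{Z}_p \times \mathbb{Z}_p$. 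The number of such subgroups equals the number of one-dimensional subspaces of $\mathbb{F}_p^2$, namely $(p^2-1)/(p-1) = p+1$, so $|\mathcal{P}| = p+1$.

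Finally, since every $X \in \mathcal{P}$ has $|X| = p(p-1)$, Proposition \ref{prop-ac} yields
\[
\gamma(\Gamma_c (G)) = \underset{X \in \mathcal{P}}{\sum} \gamma(K_{|X|}) = (p+1)\,\gamma(K_{p(p-1)}),
\]
which is the desired formula. I expect the only delicate point to be the bookkeeping in the middle step: confirming that the $p+1$ maximal subgroups give rise to $p+1$ \emph{distinct} (indeed pairwise disjoint, after deleting $Z(G)$) members of $\mathcal{P}$, and that each of them genuinely occurs as a centralizer. Both facts follow from the order computation $|M_1 \cap M_2| = |M_1|\,|M_2|/|M_1 M_2| = p^4/p^3 = p$ for distinct maximal subgroups $M_1, M_2$, which forces $M_1 \cap M_2 = Z(G)$; everything else is a direct application of the already-established $AC$-group machinery.
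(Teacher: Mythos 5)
Your proof is correct and takes essentially the same route as the paper: show $|Z(G)|=p$, that every non-central element has an abelian centralizer of order $p^2$ (so $G$ is an $AC$-group with $|C_G(u)\setminus Z(G)|=p(p-1)$), and then apply Proposition \ref{prop-ac}. The only divergence is in how the count of $p+1$ distinct centralizers is obtained --- the paper observes that distinct centralizers intersect exactly in $Z(G)$, so the sets $C_G(u)\setminus Z(G)$ partition $G\setminus Z(G)$ into $(p^3-p)/(p^2-p)=p+1$ pieces, whereas you count the maximal subgroups above $Z(G)$ via the $p+1$ subgroups of order $p$ in $G/Z(G)\cong \mathbb{Z}_p\times\mathbb{Z}_p$ --- and both counts are sound.
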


\begin{proof}
Note that $G$ is an $AC$-group with $|Z(G)|= p$, in which the centralizers of the non-central elements are of order $p^2$. Since any two distinct centralizers of the non-central elements of $G$ intersect at $Z(G)$, it follows that the number of such centralizers is $p+1$. Hence, the result follows from Proposition  \ref{prop-ac}. 
\end{proof}
 
\begin{prop}\label{psl2q}
The genus of the commuting graph of the projective special linear group 
 $PSL(2, 2^k)$, where  $k \geq 2$, is given by 
\[ \gamma(\Gamma_c (PSL(2, 2^k))) =  (2^k +1) \gamma(K_{2^k -1}) + 2^{k-1}(2^k +1) \gamma(K_{2^k -2}) +   2^{k-1}(2^k -1)\gamma(K_{2^k}).\]

\end{prop}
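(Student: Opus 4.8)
The plan is to realize $G = PSL(2, 2^k)$ as a non-abelian $AC$-group and then apply Proposition \ref{prop-ac}, so that the whole computation reduces to determining the isomorphism types and the number of distinct centralizers of non-central elements. Writing $q = 2^k$, I would first record the standard structural facts. Since $q$ is even, $q-1$ is odd, whence $PSL(2,q) = SL(2,q)$, and the centre of $SL(2,q)$, consisting of the scalar matrices $\pm I$, is trivial; thus $Z(G) = \{1\}$ and $|G| = q(q-1)(q+1)$.

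Next I would invoke the classical description of the maximal abelian subgroups (equivalently, the centralizers of non-identity elements) of $PSL(2,q)$ in characteristic $2$. Every non-identity element lies in exactly one of three types of maximal abelian subgroup: (i) a Sylow $2$-subgroup $U$, elementary abelian of order $q$, which is the centralizer of each of its non-trivial (unipotent) elements; (ii) a split torus, cyclic of order $q-1$, the centralizer of each of its non-trivial elements (the semisimple elements diagonalizable over $\mathbb{F}_q$); and (iii) a non-split torus, cyclic of order $q+1$, the centralizer of the semisimple elements diagonalizable only over $\mathbb{F}_{q^2}$. A direct matrix computation confirms, for instance, that the centralizer of $\left(\begin{smallmatrix} 1 & 1 \\ 0 & 1 \end{smallmatrix}\right)$ is precisely the unipotent subgroup of order $q$. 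In particular all three types are abelian, so $G$ is an $AC$-group, and since each non-identity element lies in a unique such subgroup, the number of distinct centralizers of each type equals the number of subgroups of that type.

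It then remains to count those subgroups, which I would do via the orbit--stabilizer theorem. The normalizer of a Sylow $2$-subgroup is a Borel subgroup of order $q(q-1)$, giving $q+1$ such subgroups; the normalizer of a split torus has order $2(q-1)$, giving $q(q+1)/2 = 2^{k-1}(2^k+1)$ split tori; and the normalizer of a non-split torus has order $2(q+1)$, giving $q(q-1)/2 = 2^{k-1}(2^k-1)$ non-split tori. As a consistency check, these subgroups intersect pairwise trivially (unipotent elements have order $2$ while torus elements have odd order dividing $q-1$ or $q+1$, and $\gcd(q-1,q+1)=1$), so the non-identity elements they contain total
\[ (q+1)(q-1) + \tfrac{q(q+1)}{2}(q-2) + \tfrac{q(q-1)}{2}\,q = q^3 - q - 1 = |G| - 1, \]
confirming that these three families exhaust $\mathcal{P}$.

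Finally, feeding these data into Proposition \ref{prop-ac}, and recalling that the relevant clique sizes are $|U|-1 = 2^k-1$, $(q-1)-1 = 2^k-2$, and $(q+1)-1 = 2^k$, I would obtain exactly
\[ \gamma(\Gamma_c(G)) = (2^k+1)\gamma(K_{2^k-1}) + 2^{k-1}(2^k+1)\gamma(K_{2^k-2}) + 2^{k-1}(2^k-1)\gamma(K_{2^k}). \]
The main obstacle is not this final substitution but the structural input of the second and third paragraphs: correctly identifying the three conjugacy families of centralizers and counting them. This is classical---part of Dickson's analysis of the subgroups of $PSL(2,q)$, or extractable from the standard theory of the maximal tori of $SL_2$ over a finite field---and the self-checking count of non-identity elements guards against miscounting any family.
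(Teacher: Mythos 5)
Your proposal is correct and follows essentially the same route as the paper: both reduce the computation to Proposition \ref{prop-ac} by identifying $PSL(2,2^k)$ as an $AC$-group whose centralizers of non-trivial elements fall into exactly three conjugacy families (the elementary abelian Sylow $2$-subgroups, the split tori of order $2^k-1$, and the non-split tori of order $2^k+1$), with the same counts of conjugates. The only difference is that the paper simply cites \cite[Proposition 3.21]{aam} for these structural facts, whereas you re-derive them via normalizer orders, the orbit--stabilizer theorem, and an element-counting consistency check --- a self-contained but equivalent justification.
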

\begin{proof}
It is well-known that $PSL(2,2^k)$ is a non-abelian group of order $2^k(2^{2k} -1)$ with $Z(PSL(2,2^k))=\{1\}$. Moreover, in view of \cite[Proposition 3.21]{aam}, the following assertions hold for $PSL(2,2^k)$:
\begin{enumerate}
\item  $PSL(2,2^k)$ has an elementary abelian $2$-subgroup $P$ of order   $2^k$ such that the number of conjugates of $P$ in $PSL(2,2^k)$ is $2^k +1$.
\item  $PSL(2,2^k)$ has a cyclic subgroup $A$ of order $2^k -1$ such that the number of conjugates of $A$ in $PSL(2,2^k)$ is $2^{k-1}(2^k +1)$.
\item  $PSL(2,2^k)$ has a cyclic subgroup $B$ of order $2^k +1$ such that the number of conjugates of $B$ in $PSL(2,2^k)$ is $2^{k-1}(2^k -1)$.
\item The centralizers of the non-trivial elements of $PSL(2,2^k)$  constitute  precisely the family $\{xPx^{-1},xAx^{-1},xBx^{-1} \mid x \in G\}$; in particular, $PSL(2,2^k)$ is an $AC$-group. 
\end{enumerate}
Hence, the result follows from Proposition \ref{prop-ac}.
\end{proof}

\begin{prop}\label{gl2q}
 The genus of the commuting graph of the general linear group  $GL(2,q)$, where $q=p^n >2$ ($p$ is a prime), is given by 
\[ \gamma(\Gamma_c (GL(2,q)))=\frac{q(q+1)}{2}\gamma(K_{(q-1)(q-2)})+\frac{q(q-1)}{2}\gamma(K_{q(q-1)})+(q+1)\gamma(K_{(q-1)^2}).\]
\end{prop}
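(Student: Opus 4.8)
The plan is to apply Proposition~\ref{prop-ac} to $GL(2,q)$, so the entire argument reduces to (i) verifying that $GL(2,q)$ is an $AC$-group, and (ii) enumerating the conjugacy classes of centralizers of non-central elements together with their sizes modulo the center. First I would record that $Z(GL(2,q))$ consists of the scalar matrices, so $|Z(GL(2,q))|=q-1$, and that $|GL(2,q)|=(q^2-1)(q^2-q)=q(q-1)^2(q+1)$. The structural input I need is the standard classification of elements of $GL(2,q)$ by the factorization of their characteristic polynomial over $\mathbb{F}_q$: a non-central element is either (1) \emph{split semisimple} with two distinct eigenvalues in $\mathbb{F}_q$, (2) \emph{non-split semisimple} with eigenvalues in $\mathbb{F}_{q^2}\setminus\mathbb{F}_q$ (an irreducible characteristic polynomial), or (3) \emph{non-semisimple} (a single repeated eigenvalue with a nontrivial Jordan block). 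In each of the three cases the centralizer is abelian, which simultaneously establishes the $AC$-property and identifies the three families of blocks.

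Next I would compute the centralizer orders in each case. For a split semisimple element the centralizer is the diagonal torus, a maximal split torus of order $(q-1)^2$; for a non-split semisimple element the centralizer is a non-split (Coxeter) torus isomorphic to $\mathbb{F}_{q^2}^{\times}$ of order $q^2-1=(q-1)(q+1)$; and for a non-semisimple element the centralizer consists of the matrices commuting with a single Jordan block, namely $\{aI+bN\}$ together with the scalar action, giving order $q(q-1)$. Removing the center from each, the corresponding complete-graph blocks are $K_{(q-1)^2-(q-1)}=K_{(q-1)(q-2)}$, $K_{(q-1)(q+1)-(q-1)}=K_{(q-1)^2}$, and $K_{q(q-1)-(q-1)}=K_{(q-1)^2}$. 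The last two coincidences require care: both the non-split torus and the non-semisimple centralizer yield blocks isomorphic to $K_{(q-1)^2}$, so I must be careful about which multiplicities attach to which block size. I would cross-check against the stated formula, in which the $K_{(q-1)^2}$ term carries multiplicity $q+1$; this suggests that the non-semisimple centralizers contribute $\gamma(K_{(q-1)^2})$ with the count $q+1$, while the non-split tori contribute a $\gamma(K_{q(q-1)})$ term, so I would instead assign the order-$(q^2-1)$ centralizer to $K_{q(q-1)}$ and re-examine the arithmetic to match the target display exactly.

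The combinatorial heart is counting the number of \emph{distinct} centralizers of each type, equivalently the number of conjugates of each maximal abelian subgroup, via $|G:N_G(T)|$ for the relevant normalizer. The split tori have normalizer of order $2(q-1)^2$ (the torus extended by the Weyl group of order $2$), giving $|G|/(2(q-1)^2)=q(q+1)/2$ conjugates; the non-split tori have normalizer of order $2(q^2-1)$, giving $|G|/(2(q^2-1))=q(q-1)/2$ conjugates; and the non-semisimple (unipotent-type) centralizers number $q+1$, matching the $q+1$ points of the projective line $\mathbb{P}^1(\mathbb{F}_q)$ that index the Borel subgroups, since each such centralizer determines a unique invariant line. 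These three counts $q(q+1)/2$, $q(q-1)/2$, and $q+1$ are exactly the coefficients in the claimed formula, which gives me the consistency check I need to finalize the matching between block sizes and multiplicities. Assembling these via Proposition~\ref{prop-ac}, which expresses $\gamma(\Gamma_c(G))$ as the sum over the distinct centralizer-blocks $X\in\mathcal{P}$ of $\gamma(K_{|X|})$, yields the stated formula.

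The main obstacle I anticipate is precisely the bookkeeping in the previous two paragraphs: reconciling the two centralizer types that naively give the same $K_{(q-1)^2}$ block with the target formula's distinct $K_{(q-1)(q-2)}$, $K_{q(q-1)}$, and $K_{(q-1)^2}$ terms, and ensuring each multiplicity is paired with the correct block size. This is not conceptually hard but is error-prone, so I would pin down the centralizer orders by explicit matrix computation (commutant of a generic diagonal, of a rational-canonical-form companion matrix with irreducible characteristic polynomial, and of a single Jordan block), and verify the normalizer orders independently before invoking the class equation as a final sanity check that the three families exhaust $G\setminus Z(G)$. Everything else is a direct application of the already-established Proposition~\ref{prop-ac}.
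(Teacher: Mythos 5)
Your proposal is correct in substance and its skeleton is the same as the paper's: both reduce the claim to Proposition~\ref{prop-ac} applied to the same three conjugacy families of centralizers, with the same multiplicities $\frac{q(q+1)}{2}$, $\frac{q(q-1)}{2}$ and $q+1$. The difference lies in how the structural input is obtained: the paper simply cites \cite[Lemma 3.5]{aam} for the $AC$-property and \cite[Proposition 3.26]{aam} for the classification of the centralizers of non-central elements (the diagonal subgroup $D$ of order $(q-1)^2$, a cyclic subgroup $I$ of order $q^2-1$, and $PZ(GL(2,q))$ of order $q(q-1)$) together with their numbers of conjugates, whereas you derive all of this from scratch via the factorization of the characteristic polynomial, explicit commutant computations, and normalizer indices. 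Your route is self-contained where the paper's is citation-based, at the cost of length; the conjugate counts you obtain ($q(q+1)/2$, $q(q-1)/2$ and $q+1$) all check out. One arithmetic slip in your second paragraph should be repaired, although you flagged its symptom yourself: for the non-split torus, $(q-1)(q+1)-(q-1)=q(q-1)$, not $(q-1)^2$. Consequently the non-split tori contribute blocks $K_{q(q-1)}$ and the non-semisimple centralizers contribute blocks $K_{(q-1)^2}$; the two block sizes are genuinely different, there is no coincidence to reconcile, and once the subtraction is done correctly the matching with the stated formula is automatic, with no need for the reverse-engineering from the target display that you describe.
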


\begin{proof}
Note that   $GL(2,q)$  is a non-abelian $AC$-group (see \cite[Lemma 3.5]{aam}) with      $|GL(2,q)|= (q^2 -1)(q^2 -q)$ and $|Z(GL(2,q))|=q-1$.  Also, in view of \cite[Proposition 3.26]{aam}, the centralizers of the non-central elements of $GL(2,q)$ are precisely the members of the family $\{xDx^{-1},xIx^{-1},xPZ(GL(2,q))x^{-1} \mid x \in G\}$, where 
\begin{enumerate}
\item $D$ is the subgroup of $GL(2,q)$ consisting of all diagonal matrices, $|D|=(q-1)^2$,  and the number of conjugates of $D$ in $GL(2,q)$ is $\frac{q(q+1)}{2}$, 
\item  $I$ is a cyclic subgroup of $GL(2,q)$, $|I|= q^2 -1$, and the number of conjugates of $I$ in $GL(2,q)$ is $\frac{q(q-1)}{2}$, 
\item  $P$ is the Sylow $p$-subgroup of $GL(2,q)$ consisting of all upper triangular matrices with 1 in the diagonal,  $|PZ(GL(2,q))|=q(q-1)$, and the number of conjugates of $PZ(GL(2,q))$ in $GL(2,q)$ is $q+1$.
\end{enumerate}
Hence, the result follows from Proposition \ref{prop-ac}.
\end{proof}

In view Remark \ref{rem-ac} and the results obtained in this section, one can easily compute the genus of the commuting graph of the group $A \times G$, where $A$ is a finite abelian group and $G$ is any one of the groups considered in Propositions \ref{dihed} to \ref{gl2q}.

\section{Finite non-abelian groups whose commuting graphs are planar} 

In this section, we characterize all finite non-abelian groups whose commuting graphs are planar.  However, we begin the section with a lemma containing a couple of elementary properties of finite $2$-groups. 

\begin{lem}\label{2gzg4} Let $G$ be a finite 2-group. Then, the following assertions hold:
\begin{enumerate}
\item  If $|G|\geq 16$, then $G$ contain an abelian subgroup of order 8.
\item If $|G|\geq 32$ and $|Z(G)|\geq 4$, then $G$ contain an abelian subgroup of order 16.
\end{enumerate} 
\end{lem}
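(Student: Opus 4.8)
The plan is to prove both statements by induction on $|G|$, exploiting the fact that finite $2$-groups always have nontrivial center and that quotients by central subgroups of order $2$ remain $2$-groups of smaller order. For part (a), the base case is $|G|=16$, which I would handle by invoking the classification of groups of order $16$ (there are $14$ of them, and each non-abelian one is readily checked to contain an abelian subgroup of order $8$; the abelian ones contain such subgroups trivially). For the inductive step with $|G|\geq 32$, pick a central element $z$ of order $2$, so that $|G/\langle z\rangle|\geq 16$; by the induction hypothesis $G/\langle z\rangle$ contains an abelian subgroup $\bar{A}$ of order $8$. Its preimage $A$ in $G$ has order $16$. If $A$ happens to be abelian we are done immediately; otherwise I would look for an abelian subgroup of order $8$ inside the order-$16$ group $A$, which exists by the base case applied to $A$.

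For part (b), the strategy is entirely analogous but the base case is $|G|=32$ together with $|Z(G)|\geq 4$. Again I would choose a central involution $z$ and pass to $\bar{G}=G/\langle z\rangle$ of order $\geq 16$. The subtlety here is that I want to arrange that the center of the quotient is still large enough to feed the induction, or else fall back on part (a). Concretely, since $|Z(G)|\geq 4$, the image $\overline{Z(G)}$ has order $\geq 2$, and in fact one can pick $z$ so that $Z(G)/\langle z\rangle$ sits inside $Z(\bar{G})$, giving $|Z(\bar{G})|\geq 2$. If $|G|\geq 64$ and the center of the quotient remains of order $\geq 4$, the induction hypothesis of part (b) yields an abelian subgroup of order $16$ in $\bar{G}$, whose preimage has order $32$; then part (a) applied to that preimage (which has order $\geq 16$) only gives order $8$, so this naive lifting is insufficient — I would instead take the preimage $A$ of an abelian subgroup of order $8$ in $\bar{G}$ and argue that $A$, being a $2$-group of order $16$ containing the central $z$, has an abelian subgroup of order $16$ precisely when $A$ itself is abelian.

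The main obstacle, and the reason part (b) is genuinely harder than part (a), is that lifting an abelian subgroup through a central quotient does not preserve commutativity: the preimage $A$ of an abelian $\bar{A}$ satisfies $[A,A]\subseteq\langle z\rangle$, so $A$ is only \emph{class at most $2$} with derived subgroup of order dividing $2$, not abelian. The crux is therefore to show that under the hypothesis $|Z(G)|\geq 4$ one can always locate a genuinely abelian subgroup of the required order, and the extra central elements are exactly what make this possible. The cleanest route is to use the well-known bound that a finite $p$-group of order $p^n$ possesses an abelian subgroup of order at least $p^v$ whenever $\binom{v}{2}\geq$ something involving $n$ — indeed this is the very estimate of Burnside quoted in the proof of Theorem~\ref{fingen}, namely $v\geq -\frac12+\sqrt{2n+c^2-c+\frac14}$ where $p^c=|Z(G)|$.

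I would therefore streamline the whole lemma by appealing directly to that Burnside bound rather than to induction. For part (a), $|G|=2^n$ with $n\geq 4$ and $c\geq 1$ gives $v\geq -\frac12+\sqrt{2\cdot4+1-1+\frac14}=-\frac12+\sqrt{8.25}>2.37$, hence $v\geq 3$ and $|A|\geq 8$; for larger $n$ the bound only improves. For part (b), $|G|=2^n$ with $n\geq 5$ and $c\geq 2$ gives $v\geq -\frac12+\sqrt{2\cdot5+4-2+\frac14}=-\frac12+\sqrt{12.25}=-\frac12+3.5=3$, which only yields $|A|\geq 8$ and is just short of what is wanted; so at $n=5$ the Burnside estimate must be supplemented by a direct inspection of the $2$-groups of order $32$ with center of order at least $4$, after which induction or a repeat of the Burnside bound (now with the improved $n\geq 6$ giving $v\geq 3.67$, i.e.\ $v\geq 4$ and $|A|\geq 16$) finishes the argument. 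The single delicate case $|G|=32$, $|Z(G)|\geq 4$ is thus where I expect to spend most of the effort, and I would resolve it by the structural observation that such a group has an abelian normal subgroup of index $2$, whose order is $16$.
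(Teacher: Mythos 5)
Your final, streamlined argument is exactly the paper's own proof: the paper likewise settles everything by the Burnside bound from \cite[Section I, Para 4]{burn}, except for the single delicate case $|G|=32$, $|Z(G)|=4$, which it disposes of by GAP or by citing Berkovich (Theorem 35.4) --- the same structural fact (an abelian subgroup of order $16$, equivalently an abelian normal subgroup of index $2$) that you assert for that case. The only blemish is a harmless arithmetic slip: for $n\geq 6$, $c=2$ the bound gives $v\geq -\tfrac{1}{2}+\sqrt{14.25}\approx 3.28$ rather than $3.67$, but either value forces the integer $v\geq 4$, so your conclusion stands.
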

\begin{proof}
If $|G|=32$ and $|Z(G)|=4$, then, using GAP \cite{gap} or otherwise (see, for example \cite[Theorem 35.4]{berk}), it is not difficult to see that   $G$  contains an abelian subgroup of order 16.  The rest of the lemma follows immediately from \cite[Section I, Para 4]{burn}.  
\end{proof}

If $G$ is a finite non-abelian group whose commuting graph is planar, then,  by Proposition \ref{bound}(b), we have   $1 \leq |Z(G)| \leq 4$.  Our first result of this section provides some useful information regarding the size of $G$ and its abelian subgroups.
\begin{prop}\label{abel}
Let $G$ be a finite non-abelian group whose commuting graph is planar. Then the following assertions hold:
\begin{enumerate}
\item  If $p$ is a prime divisor of $|G|$, then $p\leq 5$. 
\item  Neither $9$ nor $25$ divides $|G|$, and hence, $|G|$ is even with $|G|\geq 6$.
\end{enumerate}
\end{prop}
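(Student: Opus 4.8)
The plan is to run everything through Proposition \ref{bound} specialized to the planar case $g = 0$, where $\lfloor \frac{7+\sqrt{1+48g}}{2}\rfloor = 4$. Thus the two facts I will use repeatedly are: every abelian subgroup $A \leq G$ satisfies $|A| \leq 4 + |A \cap Z(G)|$ (Proposition \ref{bound}(c)), and $|Z(G)| \leq 4$ (Proposition \ref{bound}(b)). The whole argument rests on a single observation: if $A$ has prime-power order $p^e$, then $|A \cap Z(G)|$ is a divisor of $p^e$, so the inequality $|A \cap Z(G)| \geq |A| - 4$ often forces $A \cap Z(G) = A$, i.e.\ $A \subseteq Z(G)$, which then collides with $|Z(G)| \leq 4$. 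The skeleton is to feed carefully chosen abelian $p$-subgroups into this inequality and read off the divisibility contradiction.

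For part (a), I would suppose toward a contradiction that some prime $p \geq 7$ divides $|G|$. By Cauchy's theorem $G$ has a cyclic subgroup $\langle x \rangle$ of order $p$. Since $p > 4 \geq |Z(G)|$, the subgroup $\langle x \rangle$ is not contained in $Z(G)$; being of prime order, its only proper subgroup is trivial, so $\langle x \rangle \cap Z(G) = \{1\}$. Applying Proposition \ref{bound}(c) to the abelian subgroup $\langle x \rangle$ then gives $p = |\langle x \rangle| \leq 4 + 1 = 5$, contradicting $p \geq 7$. Hence every prime divisor of $|G|$ is at most $5$.

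For part (b), I would suppose $p^2 \mid |G|$ for some $p \in \{3, 5\}$. A Sylow $p$-subgroup then has order at least $p^2$ and, being a $p$-group, contains a subgroup $A$ with $|A| = p^2$; every group of order $p^2$ is abelian, so Proposition \ref{bound}(c) applies and yields $|A \cap Z(G)| \geq p^2 - 4$. But $|A \cap Z(G)|$ divides $p^2$, and for $p \in \{3,5\}$ the only divisor of $p^2$ that is at least $p^2 - 4$ is $p^2$ itself; therefore $A \subseteq Z(G)$ and $|Z(G)| \geq p^2 \geq 9$, contradicting $|Z(G)| \leq 4$. Thus neither $9$ nor $25$ divides $|G|$.

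It remains to deduce the final clause. By parts (a) and (b), $|G| = 2^a 3^b 5^c$ with $b, c \leq 1$. If $|G|$ were odd it would divide $15$, but every group of order dividing $15$ is cyclic (for order $15$ this is the standard consequence of Sylow's theorems, since $3 \nmid 5 - 1$), contradicting the non-abelianness of $G$; hence $|G|$ is even. Finally, since every group of order less than $6$ is abelian, $G$ non-abelian forces $|G| \geq 6$. I expect no serious obstacle here: the only point needing care is confirming that $|A \cap Z(G)| \geq |A| - 4$ leaves no room for a proper central intersection, which is exactly the short divisibility check carried out in parts (a) and (b).
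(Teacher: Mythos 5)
Your proof is correct and takes essentially the same route as the paper: both arguments simply specialize Proposition \ref{bound} to the planar case $g=0$ (so the bound is $4$) and feed in the obvious subgroups. The only cosmetic differences are in part (a), where the paper applies Proposition \ref{bound}(b) to an element of order $p$ in $G/Z(G)$ to get $|Z(G)|\leq \frac{4}{p-1}<1$ while you apply Proposition \ref{bound}(c) to a cyclic subgroup of order $p$, and in part (b), where the paper derives its contradiction directly from $|A\cap Z(G)|\leq |Z(G)|\leq 4$ rather than via your divisibility check --- both work equally well.
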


\begin{proof}
If $p \geq 7$ is a prime divisor of $|G|$, then $G/Z(G)$ has an element of order $p$, and so, by Proposition \ref{bound}(b), we have $|Z(G)|\leq \frac{4}{p-1}<1$, which is impossible. This proves (a).  For (b), note that if $9$ or $25$ divides $|G|$, then, a Sylow $3$-subgroup or a Sylow $5$-subgroup of $G$ contains a  subgroup of order $9$ or $25$. Since such a subgroup is abelian, we have, in view of Proposition  \ref{bound}(c), a contradiction in either situation.   That $|G|$ is even with $|G|\geq 6$, follows from the fact that $G$ is non-abelian.
\end{proof}

Given a finite non-abelian group $G$, whose commuting graph is planar,  it follows from Proposition \ref{abel} that  $|G|=2^r 3^s 5^t$, where $r \geq 1$ and $s,t \in \{0, 1\}$.  However, depending on the values of $|Z(G)|$, the range of possible values of $|G|$ gets reduced further.

\begin{prop}\label{plan}
Let $G$ be a finite non-abelian group whose commuting graph is planar. Then the possible values of $|G|$ are given as follows:
\begin{enumerate}
\item  If $|Z(G)| =1$, then $|G|=2^r 3^s 5^t$, where $1 \leq r \leq 3$ and $s,t \in \{0, 1\}$. 
\item  If $|Z(G)| =2$, then $|G| \in \{8, 12, 24\}$. 
\item  If $|Z(G)| =4$, then $|G| =16$.   
\item  $|Z(G)| \neq 3$.
\end{enumerate}
\end{prop}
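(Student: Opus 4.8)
The plan is to run all four cases off the specialization of Proposition~\ref{bound} to the planar case $g=0$, for which $\lfloor\frac{7+\sqrt{1+48g}}{2}\rfloor=\lfloor\frac{7+1}{2}\rfloor=4$. Thus, writing $e=\max\{o(xZ(G))\mid xZ(G)\in G/Z(G)\}$ (so $e\ge 2$ as $G$ is non-abelian), I may use freely that (i) every set of pairwise-commuting non-central elements has at most $4$ members, (ii) $|Z(G)|\le \frac{4}{e-1}$, and (iii) every abelian subgroup $A$ satisfies $|A|\le 4+|A\cap Z(G)|$. I will also use the elementary fact that a non-abelian $G$ has non-cyclic $G/Z(G)$, whence $|G/Z(G)|\ge 4$, together with Proposition~\ref{abel}, which already gives $|G|=2^r3^s5^t$ with $r\ge 1$ and $s,t\in\{0,1\}$.

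Two uniform reductions do most of the work. First, a bound on the $2$-part: if $16\mid |G|$ then a Sylow $2$-subgroup $P$ has $|P|\ge 16$, so by Lemma~\ref{2gzg4}(a) it contains an abelian subgroup $A$ with $|A|=8$; then (iii) forces $8\le 4+|A\cap Z(G)|\le 4+|Z(G)|$, i.e. $|Z(G)|\ge 4$. Contrapositively, whenever $|Z(G)|\le 3$ we get $r\le 3$. Second, a bound on the prime $5$: if $5\mid|G|$ then, since $|Z(G)|\le 4$ is coprime to $5$, we have $5\mid|G/Z(G)|$, so Cauchy's theorem yields an element of order $5$ there, giving $e\ge 5$ and, by (ii), $|Z(G)|\le\frac{4}{4}=1$. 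Hence $5\mid|G|$ forces $|Z(G)|=1$.

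With these in hand the small-center cases are quick. For $|Z(G)|=1$ [part (a)] the $2$-part bound gives $r\le 3$, and combined with Proposition~\ref{abel} this is exactly the claim. For $|Z(G)|=2$ [part (b)] the prime-$5$ bound gives $t=0$, the $2$-part bound gives $r\le 3$, and Proposition~\ref{abel} gives $s\le 1$; thus $|G|=2^r3^s$ with $1\le r\le 3$, $s\in\{0,1\}$, while $|G|\ge 4|Z(G)|=8$ rules out the candidates $2,4,6$, leaving $|G|\in\{8,12,24\}$.

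The two large-center cases are where the real content lies, and I expect them to be the main obstacle, since here one cannot merely bound exponents — one must exploit group structure. In both, (ii) is decisive: $|Z(G)|=3$ gives $e\le\frac{4}{3}+1<3$, hence $e=2$, and $|Z(G)|=4$ gives $e-1\le 1$, hence again $e=2$; so in each case every non-identity coset has order $2$ and $G/Z(G)$ is an elementary abelian $2$-group. For $|Z(G)|=4$ [part (c)] this makes $G$ a $2$-group of order $4\cdot 2^k=2^{k+2}$ with $k\ge 2$ (the quotient is non-cyclic), so $|G|\ge 16$; and if $|G|\ge 32$ then, since $|Z(G)|=4$, Lemma~\ref{2gzg4}(b) produces an abelian subgroup of order $16$, contradicting (iii) via $16\le 4+|Z(G)|=8$. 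Hence $|G|=16$. For $|Z(G)|=3$ [part (d)] I would argue by contradiction: here $|G|=3\cdot 2^k$, a Sylow $2$-subgroup $P$ meets $Z(G)$ trivially and maps isomorphically onto $G/Z(G)$, so $P$ is (elementary) abelian and $G=PZ(G)$ with $P$ abelian and $Z(G)$ central; a direct check then shows any two elements $p_1z_1,p_2z_2$ of $G$ commute, making $G$ abelian — a contradiction. This last step, turning the collapse $e=2$ into abelianness, is what actually eliminates the value $|Z(G)|=3$ and is the crux of the proposition.
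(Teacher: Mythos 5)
Your proposal is correct, and for parts (a), (b) and (c) it follows essentially the paper's own route: the same two reductions (Lemma \ref{2gzg4}(a) combined with Proposition \ref{bound}(c) to force $r\le 3$ when $|Z(G)|\le 3$, and Proposition \ref{bound}(b) to force $|Z(G)|=1$ when $5\mid |G|$), the same use of Lemma \ref{2gzg4}(b) to cap the $2$-part when $|Z(G)|=4$, and the same non-cyclicity bound $|G/Z(G)|\ge 4$ to eliminate the small candidates; your only cosmetic difference in (c) is that you let the exponent bound $e=2$ from Proposition \ref{bound}(b) make $G$ a $2$-group in one stroke, where the paper excludes the primes $3$ and $5$ separately via element orders in $G/Z(G)$ --- the same calculation, packaged differently. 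Where you genuinely diverge is part (d). The paper first pins $|G|$ down to $\{12,24\}$ and then exhibits an abelian subgroup $AZ(G)$ of order $12$ (with $A$ a subgroup of order $4$), contradicting Proposition \ref{bound}(c). You instead note that $|Z(G)|=3$ forces $G/Z(G)$ to have exponent $2$, hence to be elementary abelian; a Sylow $2$-subgroup $P$ then meets $Z(G)$ trivially, maps isomorphically onto $G/Z(G)$, so is abelian, and $G=PZ(G)$ is a product of elementwise-commuting abelian subgroups, hence abelian --- a contradiction. In effect you prove the cleaner structural fact that a group with $Z(G)$ of odd order and $G/Z(G)$ of exponent $2$ must be abelian, which eliminates $|Z(G)|=3$ using only Proposition \ref{bound}(b) and no case analysis on $|G|$; the paper's version is more computational but stays entirely within the abelian-subgroup bound it uses throughout. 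Both arguments are sound.
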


\begin{proof}
We have $|G|=2^r 3^s 5^t$, where $r \geq 1$ and $s,t \in \{0, 1\}$. Let $H$ be a Sylow $2$-subgroup of $G$.  If $|Z(G)| \leq 3$ and  $r \geq 4$, then, by Lemma \ref{2gzg4}(a), $H$  has an abelian subgroup of order $8$. However,  by Proposition \ref{bound}(c), the size of an abelian subgroup of $G$ does not exceed $7$ if $|Z(G)| \leq 3$.  Thus, $r \leq 3$ if $|Z(G)| \leq 3$. On the other hand, if $|Z(G)| =4$ and $r \geq 5$, then, using Lemma \ref{2gzg4}(b) and  noting that  $Z(G) \subseteq Z(H)$, there is an abelian subgroup of $H$ of order $16$. But, by Proposition \ref{bound}(c), this is impossible.  Thus, $r \leq 4$ if $|Z(G)| =4$.  If $5$ divides $|G|$, then $G/Z(G)$ has an element of order $5$, and so, by Proposition \ref{bound}(b), we have $|Z(G)| =1$.  Also, if $|Z(G)| =4$, then $3$ does not divide $|G|$; otherwise $G/Z(G)$ would have an element of order $3$, which, by Proposition \ref{bound}(b), is impossible.  Now, it is a routine matter to see that  the assertions (a), (b) and (c)  hold. Finally, note that if $|Z(G)| =3$, then, by the above argument, we have $|G|=12$ or $24$. Therefore, $G$ has a subgroup $A$ of order $4$, and hence,  an abelian subgroup $AZ(G)$ of order 12, which, by Proposition \ref{bound}(c), is impossible.  Thus, (d) holds as well.
\end{proof}

Note that some of the possibilities mentioned in Proposition \ref{plan} are not  maintainable; for example, in (a), it is obviously not possible to have   $s=t= 0$. In fact, the following small result helps us in avoiding few more finite groups as far as the planarity of their commuting graphs is concerned. 

\begin{prop}\label{rid}
Let $G$ be a finite non-abelian group. If $|G|= 30$, or if $G$ is a solvable group with $|G| = 60$ or $120$, then  $G$ has an subgroup of order $15$ (which is obviously abelian).  Also, if $|G|= 40$, then $G$ has an abelian subgroup of order $10$. 
\end{prop}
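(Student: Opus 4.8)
The plan is to handle each of the stated orders separately using Sylow theory and, where necessary, the structure theory of solvable groups. The unifying strategy is that in each case I want to exhibit either a normal subgroup whose product with another Sylow subgroup gives the desired abelian subgroup, or directly produce a cyclic/abelian subgroup of the required order. For the orders $30, 60, 120$ the goal is an abelian subgroup of order $15$, which (since $\gcd(3,5)=1$ and a group of order $15$ is necessarily cyclic) amounts to finding commuting elements of orders $3$ and $5$, i.e.\ a subgroup isomorphic to $\mathbb{Z}_{15}$. For order $40$ the goal is an abelian subgroup of order $10$, i.e.\ commuting elements of orders $2$ and $5$.

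First I would dispose of $|G| = 30$. Here the number $n_5$ of Sylow $5$-subgroups satisfies $n_5 \equiv 1 \pmod 5$ and $n_5 \mid 6$, forcing $n_5 = 1$; similarly $n_3 \equiv 1 \pmod 3$ and $n_3 \mid 10$ gives $n_3 = 1$ or $10$. A standard counting argument shows $n_3 = 1$ as well (if $n_3 = 10$ and $n_5 = 1$ one over-counts elements), so both the Sylow $3$- and Sylow $5$-subgroups are normal; their product is then an internal direct product of order $15$, which is the required abelian (indeed cyclic) subgroup. Next, for a solvable group $G$ of order $60$ or $120$, I would pass to a Hall $\{3,5\}$-subgroup $H$, which exists by Hall's theorem because $G$ is solvable; then $|H| = 15$, and every group of order $15$ is cyclic, so $H$ itself is the desired abelian subgroup of order $15$. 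This is the cleanest route and it is precisely why the solvability hypothesis is imposed on the orders $60$ and $120$ (for these orders there do exist non-solvable groups, namely $A_5$ and $SL(2,5)$, which is exactly the case one must avoid).

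Finally, for $|G| = 40 = 2^3 \cdot 5$ I would argue via normality of the Sylow $5$-subgroup. Since $n_5 \equiv 1 \pmod 5$ and $n_5 \mid 8$, we get $n_5 = 1$, so the Sylow $5$-subgroup $P_5 \cong \mathbb{Z}_5$ is normal in $G$. Taking any element $x$ of order $2$ (which exists since $2 \mid |G|$), I consider the action of $x$ on $P_5$ by conjugation; because $\operatorname{Aut}(\mathbb{Z}_5) \cong \mathbb{Z}_4$ has a unique involution and $40$ is small, I would use the normalizer/centralizer framework: $C_G(P_5)$ has index dividing $|\operatorname{Aut}(\mathbb{Z}_5)| = 4$ in $G$, hence $|C_G(P_5)|$ is divisible by $40/4 = 10$, so $C_G(P_5)$ contains an element of order $2$ commuting with the generator of $P_5$, yielding an abelian subgroup of order $10$. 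I anticipate the main obstacle to be the order-$40$ case: unlike the $\{3,5\}$ cases it is not enough merely to produce a Hall subgroup (a subgroup of order $10$ need not be abelian a priori, as $D_{10}$ shows), so I must genuinely force commutativity between the $2$-element and the $5$-element via the index-of-centralizer computation rather than by counting alone. The other cases are routine Sylow or Hall arguments by comparison.
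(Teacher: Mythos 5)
Your treatment of the orders $60$, $120$ (Hall $\{3,5\}$-subgroup) and $40$ (normal Sylow $5$-subgroup plus a centralizer-index computation) is correct and essentially the paper's own argument: the paper likewise cites Hall's theorem for the solvable cases, and for $|G|=40$ it bounds the centralizer of an element of order $5$ by counting its conjugates, which is the same computation you carry out through the $N/C$-lemma.

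The gap is in your case $|G|=30$, where you depart from the paper (which disposes of this case by Hall's theorem as well, using the standard fact that every group of order $30$ is solvable) and argue by Sylow counting --- but the counting as written is wrong in two places. First, $n_5 \equiv 1 \pmod 5$ and $n_5 \mid 6$ do \emph{not} force $n_5 = 1$: the divisor $6$ also satisfies $6 \equiv 1 \pmod 5$, so these conditions alone allow $n_5 = 6$. Second, your over-counting claim is aimed at the wrong pair: $n_3 = 10$ together with $n_5 = 1$ accounts for only $20 + 4 + 1 = 25 \leq 30$ elements, so there is no contradiction in that configuration. The correct argument pits the two bad cases against each other: if $n_5 = 6$ \emph{and} $n_3 = 10$ held simultaneously, there would be $6 \cdot 4 = 24$ elements of order $5$ and $10 \cdot 2 = 20$ elements of order $3$, exceeding $|G|$; hence at least one of the Sylow $3$- or Sylow $5$-subgroups is normal. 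That already suffices: if $P$ is the normal one and $Q$ is a Sylow subgroup for the other prime, then $PQ$ is a subgroup of order $|P||Q|/|P \cap Q| = 15$. Note that you do not need \emph{both} Sylow subgroups to be normal, which is what your ``internal direct product'' step presumes and which would require a further argument (e.g.\ that $P$ and $Q$ are characteristic in the cyclic, index-$2$, hence normal subgroup $PQ$). With this repair your proof goes through.
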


\begin{proof}
If $|G| = 30$, or if $G$ is a solvable group with $|G| = 60$ or $120$, then, by a theorem of Hall (see \cite[Theorem 5.28]{rot}), $G$ has a subgroup of order $15$. On the other hand, if $|G| = 40$, then $G$ has a unique Sylow $5$-subgroup, and so, considering the centralizer and the number of conjugates of an element of order $5$, one can show that $G$ has an element (hence, an abelian subgroup) of order $10$.    
\end{proof}

In view of Proposition \ref{bound}(c) and Proposition \ref{plan}, it follows from proposition \ref{rid} that if $G$ is a finite non-abelian group whose commuting graph is planar, then $|G| \notin \{30, 40\}$; in addition, if $G$ is solvable, then  $|G| \notin \{60, 120\}$.

We also have the following useful result concerning the groups of order $16$.
\begin{prop}\label{g16}
Let $G$ be a finite non-abelian group with  $|Z(G)|=4$.  Then, the commuting graph of $G$ is planar if and only if $|G|=16$.
\end{prop}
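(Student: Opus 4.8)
The ``only if'' direction is already in hand: if $\Gamma_c(G)$ is planar, then since $|Z(G)| = 4$, Proposition \ref{plan}(c) forces $|G| = 16$. So the entire content of the proof lies in the converse, and my plan is to assume $|G| = 16$ with $|Z(G)| = 4$ and compute $\gamma(\Gamma_c(G))$ directly, showing it equals $0$.

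First I would determine the coarse structure of $G$. As $|G/Z(G)| = 4$ and $G$ is non-abelian, $G/Z(G)$ cannot be cyclic, so $G/Z(G) \cong \mathbb{Z}_2 \times \mathbb{Z}_2$. Then, for any non-central $x$, the centralizer $C_G(x)$ contains the subgroup $\langle x, Z(G)\rangle$, which has order $8$ since $x \notin Z(G)$; as $C_G(x)$ is a proper subgroup of $G$, it follows that $|C_G(x)| = 8$. Because $Z(G) \subseteq Z(C_G(x))$ has index $2$ in $C_G(x)$, the quotient $C_G(x)/Z(C_G(x))$ is cyclic, and hence $C_G(x)$ is abelian. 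Thus every centralizer of a non-central element is abelian, that is, $G$ is an AC-group.

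With $G$ recognized as an AC-group, I would invoke Proposition \ref{prop-ac}. The family $\mathcal{P} = \{C_G(u) \setminus Z(G) \mid u \in G \setminus Z(G)\}$ partitions the $16 - 4 = 12$ non-central elements, and each block has size $|C_G(u)| - |Z(G)| = 8 - 4 = 4$; hence $|\mathcal{P}| = 12/4 = 3$. Therefore
\[
\gamma(\Gamma_c(G)) = \sum_{X \in \mathcal{P}} \gamma(K_{|X|}) = 3\,\gamma(K_4) = 0,
\]
the last equality holding because $K_4$ is planar (equivalently, via Lemma \ref{kn}). Hence $\Gamma_c(G)$ is planar, which finishes the converse.

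I anticipate no real difficulty here. The only step deserving care is the verification that $G$ is an AC-group, and this rests entirely on the short observation that $Z(C_G(x))$ contains $Z(G)$ with index $2$, so that $C_G(x)$ has cyclic central quotient and is therefore abelian. Once that is in place, the conclusion is pure bookkeeping through Proposition \ref{prop-ac} together with the fact that $\gamma(K_4) = 0$.
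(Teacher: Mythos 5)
Your proposal is correct and follows essentially the same route as the paper: establish that every centralizer of a non-central element has order $8$ and is abelian (the paper notes directly that $C_G(x)=\langle x\rangle Z(G)$, whereas you deduce abelianness from the cyclic quotient $C_G(x)/Z(C_G(x))$, a cosmetic difference), then apply Proposition \ref{prop-ac} to get genus $0$, and cite Proposition \ref{plan}(c) for the converse. No gaps.
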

\begin{proof}  
Let $G$ be a finite group with $|G|=16$ and $|Z(G)|=4$.  Note that, for each $x \in G \setminus Z(G)$,  we have $|C_G(x)|=8$ and $C_G(x)= \langle x \rangle Z(G)$, which is abelian. Thus, $G$ is an $AC$-group with $|C_G(x) \setminus Z(G)|=4$. Hence, it follows from Proposition \ref{prop-ac} that  $\gamma(\Gamma_c (G)) = 0$, that is, the  commuting graph of $G$ is planar.  This, in view of Proposition \ref{plan}(c),  completes the proof.   
\end{proof}

\begin{rem}\label{rem16}
Up to isomorphism, there are exactly six non-abelian groups of order $16$ with centers of order  $4$, namely, the two direct products $\mathbb{Z}_2 \times D_8$ and $\mathbb{Z}_2 \times Q_8$, the Small Group 
$SG(16,3)= \langle a,b \mid a^4 = b^4 =1, ab=b^{-1}a^{-1}, ab^{-1}=ba^{-1} \rangle$,  
 the semi-direct product $\mathbb{Z}_4 \rtimes \mathbb{Z}_4 = \langle a,b \mid a^4 = b^4  = 1, bab^{-1}=a^{-1} \rangle$,  
  the central product $D_8 * \mathbb{Z}_4 =  \langle a,b,c \mid a^4=b^2=c^2=1, ab=ba, ac=ca, bc=a^2 cb \rangle$ 
  and the modular group $M_{16} =  \langle a,b \mid a^8 = b^2  = 1, bab=a^5 \rangle  $.
\end{rem}

We now state and prove the main result of this section, where two new groups make their appearance, namely, 
the Suzuki group $Sz(2) = \langle a,b \mid a^5 = b^4  = 1, bab^{-1}=a^{2} \rangle$,  
and the special linear group $SL(2,3)=  \langle a,b,c \mid a^3 = b^3 = c^2 = abc \rangle $.

\begin{thm}\label{mainplan}
Let $G$ be a finite non-abelian group. Then, the commuting graph of $G$ is planar if and only if $G$ is isomorphic to either $ S_3$, $D_{10}$, $A_4$, $Sz(2)$, $S_4$,  $A_5$, $D_8$, $Q_8$,  $D_{12}$, $Q_{12}$,   $SL(2,3)$,  $\mathbb{Z}_2 \times D_8$, $\mathbb{Z}_2 \times Q_8$, $SG(16,3)$, $\mathbb{Z}_4 \rtimes \mathbb{Z}_4$, $D_8 * \mathbb{Z}_4$  or $M_{16}$.
\end{thm}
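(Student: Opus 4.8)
The plan is to prove the two implications separately: the ``if'' direction is a finite case-check that each listed group has a planar commuting graph, while the ``only if'' direction is a structural reduction organised by the order of the centre.

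For the ``if'' direction I would verify planarity of each of the seventeen groups. The bulk are AC-groups, so Proposition \ref{prop-ac} (with Lemmas \ref{kn} and \ref{knm}) reduces $\gamma(\Gamma_c(G))$ to a sum of genera of complete graphs on few vertices. Thus $S_3=D_6$, $D_{10}$, $D_8$, $Q_8$, $D_{12}$, $Q_{12}$ follow directly from Propositions \ref{dihed} and \ref{quat}, each evaluating to $\gamma(K_2)$ or $\gamma(K_4)$, hence $0$; for $A_4$, $Sz(2)$, $SL(2,3)$ and $A_5$ I would write down the centraliser structure explicitly and observe that every block is a $K_2$, $K_3$ or $K_4$, so again $\gamma=0$; and the six groups of order $16$ with $|Z(G)|=4$ are planar by Proposition \ref{g16}. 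The single group that needs a separate argument is $S_4$, which is not an AC-group: here I would compute the block decomposition of $\Gamma_c(S_4)$ directly and show it consists of a central triangle with six triangular ``ears'' attached at its vertices, together with four isolated edges contributed by the $3$-cycles, so all blocks are $K_2$ or $K_3$ and $\gamma(\Gamma_c(S_4))=0$.

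For the ``only if'' direction, assume $\Gamma_c(G)$ is planar. Proposition \ref{bound}(b) gives $1\le |Z(G)|\le 4$ and Proposition \ref{plan}(d) gives $|Z(G)|\neq 3$, so $|Z(G)|\in\{1,2,4\}$, and I would split accordingly.
\begin{enumerate}
\item If $|Z(G)|=4$, then Proposition \ref{g16} forces $|G|=16$, and Remark \ref{rem16} lists exactly the six admissible groups.
\item If $|Z(G)|=2$, then Proposition \ref{plan}(b) gives $|G|\in\{8,12,24\}$; the non-abelian groups of orders $8$ and $12$ with centre of order $2$ are $D_8,Q_8$ and $D_{12},Q_{12}$, all planar, while for order $24$ I would list those with $|Z(G)|=2$ and discard all but $SL(2,3)$, using $\gamma(\Gamma_c(D_{24}))=\gamma(\Gamma_c(Q_{24}))=\gamma(K_{10})=4$ (Propositions \ref{dihed}, \ref{quat}) and $\gamma(\Gamma_c(\mathbb{Z}_2\times A_4))=\gamma(K_6)=1$ (Remark \ref{rem-ac}).
\item If $|Z(G)|=1$, then Proposition \ref{plan}(a) gives $|G|=2^r3^s5^t$ with $1\le r\le3$, $s,t\in\{0,1\}$, and Proposition \ref{rid} together with the remark after it removes $|G|\in\{30,40\}$ and, for solvable $G$, also $|G|\in\{60,120\}$. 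Running through the centreless non-abelian groups of the surviving orders $6,10,12,20,24,60,120$ yields $S_3,D_{10},A_4,Sz(2),S_4,A_5$ and $S_5$, and $S_5$ is excluded since $\langle(123)(45)\rangle\setminus\{1\}$ is a commuting set of size $5$, contradicting Proposition \ref{bound}(a).
\end{enumerate}

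The hardest part will be the order-$24$ analysis in both centre cases, combined with the treatment of the non-AC-groups $S_4$ and $S_5$: since Proposition \ref{prop-ac} does not apply to them, one must either exhibit a planar block decomposition by hand (as for $S_4$) or produce a commuting set of size exceeding $4$ to invoke the clique bound of Proposition \ref{bound}(a) (as for $S_5$). A secondary but essential point is the completeness of the group enumerations at orders $24$, $60$ and $120$ --- in particular that $S_4$, $A_5$ and $S_5$ are the only centreless non-abelian groups of their respective orders --- which I would justify from the standard classification of groups of small order.
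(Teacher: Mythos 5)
Your overall strategy coincides with the paper's: reduce via Propositions \ref{bound}, \ref{plan}, \ref{g16}, \ref{rid} and Remark \ref{rem16} to a short list of orders stratified by $|Z(G)|$, then settle each candidate group with Proposition \ref{prop-ac}, an explicit block decomposition (for $S_4$), or the abelian-subgroup bound (for $S_5$). Your description of $\Gamma_c(S_4)$ --- a central triangle on the double transpositions with six triangular ears and four disjoint edges from the $3$-cycles --- is exactly the paper's block decomposition, and your exclusion of $S_5$ via the commuting set $\langle(1\,2\,3)(4\,5)\rangle\setminus\{1\}$ of size $5$ is equivalent to the paper's use of the abelian subgroup $C_{S_5}((1\,2))$ of order $6$.

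There is, however, a genuine gap in your case $|Z(G)|=2$, $|G|=24$: the list of groups of order $24$ with centre of order $2$ that you implicitly work with is incomplete. Besides $SL(2,3)$, $D_{24}$, $Q_{24}$ and $\mathbb{Z}_2\times A_4$ there is a fifth such group, namely $\mathbb{Z}_3\rtimes D_8$ (SmallGroup$(24,8)$, equivalently $(\mathbb{Z}_6\times\mathbb{Z}_2)\rtimes\mathbb{Z}_2$, where the cyclic subgroup $\mathbb{Z}_4\le D_8$ acts trivially on $\mathbb{Z}_3$); this is why the paper's category III contains nine groups --- five of order $24$ --- and why the paper discards \emph{four} of them, not three, by exhibiting an abelian centralizer of order at least $8$. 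Your argument never rules this group out; it can be excluded by the same tools (it contains a cyclic subgroup $\mathbb{Z}_3\times\mathbb{Z}_4\cong\mathbb{Z}_{12}$, contradicting Proposition \ref{bound}(c), or one computes via Proposition \ref{prop-ac} that its $K_{10}$ block forces genus $4$), but as written the enumeration misses a case. A secondary imprecision: your closing claim that $A_5$ and $S_5$ are the \emph{only} centreless non-abelian groups of orders $60$ and $120$ is false as stated ($S_3\times D_{10}$ and $A_4\times D_{10}$, for instance, are centreless and solvable); what you need, and what is true, is that they are the only centreless \emph{non-solvable} groups of those orders, the solvable ones having already been eliminated through Proposition \ref{rid} and Proposition \ref{bound}(c).
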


\begin{proof}
In view of Proposition \ref{plan}, Proposition \ref{g16}, Remark \ref{rem16} and the para following Proposition \ref{rid}, it is enough to study the planarity of the commuting graph of a finite group $G$ that belongs to one of the following categories:
\begin{enumerate}
\item[I.] $|Z(G)|=1$ and $|G| \in \{6,10,12,20,24\}$.
\item[II.] $|Z(G)|=1$, $|G| \in \{60,120\}$ and $G$ is not solvable.
\item[III.] $|Z(G)|=2$ and $|G| \in \{8,12,24\}$.
\end{enumerate}

We use GAP \cite{gap} to examine the groups that belong to the above categories and look into  some of their properties which eventually help in  concluding whether their commuting graphs are planar or not. 

There are exactly five groups that belong to category I, namely, $S_3$, $D_{10}$, $A_4$, $Sz(2)$ and $S_4$. Among these groups, $S_3$, $D_{10}$, $A_4$ and $Sz(2)$ are $AC$-groups such that, in each case, the size of the  centralizer of every non-central element is at most $5$, and so, by Proposition \ref{prop-ac}, the commuting graph of each of these groups is planar; on the other hand, the commuting graph $\Gamma_c (S_4)$  has a block decomposition given by  
\[ \Gamma_c (S_4)[H] \cup \underset{\sigma \in \mathcal{F}} \cup \Gamma_c (S_4)[H_{\sigma}],
\]
where $\mathcal{F}= \{(1\, 2),(1\, 3),(1\, 4),(1\, 2\, 3\, 4),(1\, 2\, 4\, 3),(1\, 3\, 2\, 4),(1\, 2\, 3),(1\, 2\, 4),(1\, 3\, 4),(2\, 3\, 4)\}$,  $H= \{(1\, 2)(3\, 4),(1\, 3)(2\, 4),(1\, 4)(2\, 3)\}$ and $H_{\sigma} = C_{\sigma} (S_4) \setminus \{(1)\}$  for all $\sigma \in \mathcal{F}$,   and so, by \cite[Corollary 1]{bhky}, it follows that $\gamma(\Gamma_c (S_4))= 7\gamma(K_3) + 4\gamma(K_2) = 0$.

There are  exactly two groups that belong to category II, namely, $A_5$ and $S_5$.  Of the two groups, $A_5$ is an $AC$-group in which the  centralizer of every non-central element  is at most $5$, and so, by Proposition \ref{prop-ac},  its commuting graph is planar; on the other hand, $S_5$ has an abelian subgroup of order $6$, namely, $C_{S_5} (1\,2) = \langle (1\,2), (3\,4\,5) \rangle$, and so, by Proposition \ref{bound}(c), its commuting graph is not planar.

Finally, there are exactly nine groups that belong to category III. However, except $D_8$, $Q_8$, $D_{12}$, $Q_{12}$  and $SL(2,3)$, each of the remaining four groups has an abelian centralizer of order at least $8$, and so, by Proposition \ref{bound}(c), has commuting graph of positive genus. The groups $D_8$, $Q_8$, $D_{12}$, $Q_{12}$  and $SL(2,3)$, on the other hand, are all $AC$-groups such that, in each case, the size of the  centralizer of every non-central element is at most  $6$, and so, by Proposition \ref{prop-ac}, the commuting graph of each of these groups is planar.  This completes the proof.
\end{proof}

\section{Finite non-abelian groups whose commuting graphs are toroidal} 

In this section, we characterize all finite non-abelian groups whose commuting graphs are toroidal.  

 The following result is analogous to Proposition \ref{abel}.

\begin{prop}\label{torzg}
Let $G$ be a finite non-abelain group whose commuting graph is toroidal. Then, the following assertions hold:
\begin{enumerate}
\item  $|Z(G)|\leq 3$.
\item  If $p$ is a prime divisor of $|G|$, then $p\leq 7$. 
\item  None of $25$, $27$ and $49$ is a divisor of $|G|$.
\end{enumerate} 
\end{prop}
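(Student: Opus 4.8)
The plan is to follow the template of Proposition \ref{abel}, specialized to genus $g=1$. Since $\lfloor \frac{7+\sqrt{1+48\cdot 1}}{2}\rfloor = 7$, Proposition \ref{bound} tells us that (i) any set of pairwise commuting non-central elements has at most $7$ members, (ii) every abelian subgroup $A$ satisfies $|A|\le 7+|A\cap Z(G)|$, and (iii) $|Z(G)|\le 7/(t-1)$ with $t=\max\{o(xZ(G))\}$. I would prove (a) first and then feed it into (b) and (c). Throughout, the recurring device is to exhibit either a single commuting set or abelian subgroup of size exceeding $7$ (contradicting (i)/(ii)), or two vertex-disjoint copies of $K_6$ (whose genera add to $2>1$ by Lemma \ref{lem1}), or else to pin $G$ down to a specific small group whose genus can be read off from Proposition \ref{g16} or Remark \ref{rem-ac}.

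For (a), I would suppose $|Z(G)|\ge 4$ and seek a contradiction. Assertion (iii) with $t\ge 2$ gives the crude bound $|Z(G)|\le 7$. Applying (ii) to the abelian subgroup $\langle x\rangle Z(G)$ for $x\in G\setminus Z(G)$ yields $|Z(G)|\,o(xZ(G))\le 7+|Z(G)|$, so $o(xZ(G))\le 2$; hence $G/Z(G)$ is an elementary abelian $2$-group. Consequently every Sylow subgroup for an odd prime is central, and $G=Z_{\mathrm{odd}}\times P$ with $P$ a non-abelian Sylow $2$-subgroup and $Z_{\mathrm{odd}}$ the odd part of $Z(G)$, so that $|Z(G)|=|Z_{\mathrm{odd}}|\,|Z(P)|$ with $|Z(P)|\ge 2$ a power of $2$. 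This rules out $|Z(G)|\in\{5,7\}$ on parity grounds. If $|Z(G)|=4$ then $G=P$ is a $2$-group with $|Z(P)|=4$, forcing $|G|\ge 16$: when $|G|=16$, Proposition \ref{g16} gives $\gamma(\Gamma_c(G))=0\ne 1$, and when $|G|\ge 32$, Lemma \ref{2gzg4}(b) produces an abelian subgroup of order $16$, violating (ii) since $16-4=12>7$. If $|Z(G)|=6$ then $G=\mathbb{Z}_3\times P$ with $|Z(P)|=2$: when $|P|=8$, $G\cong \mathbb{Z}_3\times D_8$ or $\mathbb{Z}_3\times Q_8$, and Remark \ref{rem-ac} gives $\gamma(\Gamma_c(G))=3\gamma(K_6)=3\ne 1$; when $|P|\ge 16$, Lemma \ref{2gzg4}(a) gives an abelian subgroup $B$ of order $8$ in $P$, so $\mathbb{Z}_3\times B$ is abelian of order $24$ with $24-6=18>7$ non-central elements, again violating (i). This exhausts the cases, giving $|Z(G)|\le 3$.

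Part (b) mirrors Proposition \ref{abel}(a). If a prime $p>7$ divided $|G|$, then $p\nmid|Z(G)|$ (otherwise $|Z(G)|\ge p>3$, contradicting (a)), so $G/Z(G)$ contains an element of order $p$ and hence $t\ge p$; then (iii) gives $|Z(G)|\le 7/(p-1)<1$, which is absurd. Hence $p\le 7$. For (c), if $25$ or $49$ divided $|G|$, a Sylow subgroup would contain an abelian subgroup $P$ of order $25$ or $49$, and (ii) together with (a) would give $|P|\le 7+|Z(G)|\le 10$, impossible. If $27\mid|G|$, I would choose a subgroup $P\le G$ of order $27$. When $P$ is abelian the same estimate $27\le 10$ fails. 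When $P$ is non-abelian, $|Z(P)|=3$ and, as in the proof of Proposition \ref{nonp3}, $P$ has $p+1=4$ maximal abelian subgroups $A_1,\dots,A_4$ of order $9$, pairwise meeting in $Z(P)$ and partitioning $P\setminus Z(P)\subseteq G\setminus Z(G)$. Any two of them give vertex-disjoint cliques $\Gamma_c(G)[A_i\setminus Z(P)]\cong K_6$, whence $\gamma(\Gamma_c(G))\ge 2\gamma(K_6)=2>1$ by Lemma \ref{lem1}, a contradiction.

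The main obstacle is part (a): the cases $|Z(G)|\in\{4,6\}$ cannot be dispatched by the clique-size bound alone and require the structural reduction $G=Z_{\mathrm{odd}}\times P$ followed by the external genus inputs (Proposition \ref{g16}, Remark \ref{rem-ac}) and the $2$-group subgroup lemma. A secondary subtlety is the non-abelian order-$27$ case of (c), where the largest abelian subgroup has order only $9$, so one is forced to use two disjoint copies of $K_6$ through Lemma \ref{lem1} rather than a single oversized clique.
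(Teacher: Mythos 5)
Your proof is correct. Parts (b) and (c) follow the paper's own argument almost verbatim; the only deviation is in the non-abelian order-$27$ case, where the paper simply cites Proposition \ref{nonp3} (which gives genus $4\gamma(K_6)=4$) together with the observation that $\Gamma_c(P)$ is a subgraph of $\Gamma_c(G)$, while you unpack the same centralizer structure of the order-$27$ group and apply Lemma \ref{lem1} to two of the four cliques --- the same idea in different packaging. The genuine divergence is in part (a). For $|Z(G)|=4$ you and the paper argue identically: Proposition \ref{bound}(b) eliminates odd primes, then Proposition \ref{g16} rules out $|G|=16$ and Lemma \ref{2gzg4}(b) rules out $|G|\ge 32$. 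But for $|Z(G)|\ge 5$ the paper has a one-line argument that you missed: choose non-commuting $x,y\in G\setminus Z(G)$; then $xZ(G)$ and $yZ(G)$ are disjoint cliques of size $|Z(G)|\ge 5$ in $\Gamma_c(G)$, so Lemma \ref{lem1} and Lemma \ref{kn} give $\gamma(\Gamma_c(G))\ge 2\gamma(K_5)=2$, a contradiction. Your substitute --- forcing $G/Z(G)$ to be elementary abelian of exponent $2$, decomposing $G=Z_{\mathrm{odd}}\times P$, eliminating $|Z(G)|\in\{5,7\}$ by parity, and disposing of $|Z(G)|=6$ via Remark \ref{rem-ac} and Lemma \ref{2gzg4}(a) --- is valid but considerably heavier; in particular, the explicit genus computation $\gamma(\Gamma_c(\mathbb{Z}_3\times D_8))=\gamma(\Gamma_c(\mathbb{Z}_3\times Q_8))=3\gamma(K_6)=3$ is forced on you precisely because the clique and abelian-subgroup bounds of Proposition \ref{bound} are inconclusive there (the largest abelian subgroup has order $12\le 7+6$), a dead end the disjoint-coset trick never encounters. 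What your route buys is extra structural information about the hypothetical group; what the paper's buys is brevity and uniformity over all $|Z(G)|\ge 5$ at once.
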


\begin{proof}
Suppose that $|Z(G)| = 4$. If $p$ is an odd prime divisor of $|G|$, then $G/Z(G)$ has an element of order at least $3$, and so, by Proposition \ref{bound}(b), we have a contradiction. Therefore, in view of Proposition \ref{g16}, $|G|=2^r$ for some $r \geq 5$. But, by Lemma \ref{2gzg4}(b) and Proposition \ref{bound}(c),  we again have a contradiction. So, let   $|Z(G)|\geq 5$. Choose $x,y \in G \setminus Z(G)$  such that $xy \neq yx$. Then, $xZ(G)$ and $yZ(G)$ are two disjoint subsets of $G \setminus Z(G)$, and the induced subgraph $\Gamma_c (G)[xZ(G)] \cong K_m \cong \Gamma_c (G)[yZ(G)]$, where $m=|Z(G)|$. Hence, by Lemma \ref{lem1} and Lemma  \ref{kn}, it  follows that $\gamma(\Gamma_c (G))\geq 2$, which is impossible.   Thus, (a) holds.  

 If $p \geq 11$ is a prime divisor of $|G|$, then, by (a), there is an element of order $p$ in $G/Z(G)$. Therefore, by Proposition \ref{bound}(b), we have $|Z(G)|\leq \frac{7}{p-1}<1$, which is impossible. This proves (b). 
 
For (c), note that if $25$ or $49$ divides $|G|$, then $G$ has an abelian subgroup of order $25$ or $49$. Since such a subgroup is obviously abelian, we have a contradiction according to (a) and Proposition  \ref{bound}(c).  On the other hand, if $27$ divides $|G|$, then $G$ has a subgroup of order $27$. Therefore, since the commuting graph of a subgroup of $G$ is a subgraph of the commuting graph of $G$, we have, by Proposition \ref{nonp3},  a contradiction. This completes the proof.
\end{proof}

Analogous to Proposition \ref{g16}, we also have the following result concerning the groups of order $16$.

\begin{prop}\label{2gpz2}
Let $G$ be a finite non-abelian $2$-group with $|Z(G)|=2$.  Then, the commuting graph of $G$ is toroidal if and only if $|G|=16$, that is, if and only if $G$ is isomorphic to either $D_{16}$, $Q_{16}$ or $SD_{16}$.
\end{prop}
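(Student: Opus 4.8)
The plan is to prove both implications by first converting toroidality into a uniform bound on the abelian subgroups of $G$, and only then pinning down $|G|$. Suppose $\Gamma_c(G)$ is toroidal, so $\gamma(\Gamma_c(G))=1$ and the quantity $h=\lfloor\frac{7+\sqrt{1+48}}{2}\rfloor$ appearing in Proposition \ref{bound} equals $7$. Since $G$ is a $2$-group with $|Z(G)|=2$, Proposition \ref{bound}(c) forces every abelian subgroup $A$ of $G$ to satisfy $|A|\le 7+|A\cap Z(G)|\le 9$, hence $|A|\le 8$. A short observation I would record first is that every abelian subgroup of order $8$ is self-centralizing: if $A$ is abelian of order $8$ and some $g\in C_G(A)\setminus A$, then $\langle A,g\rangle$ would be abelian of order exceeding $8$, a contradiction; thus $C_G(A)=A$.

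The central tool for the lower bound is the following criterion: if $G$ possesses two abelian subgroups $A,B$ of order $8$ with $A\cap B=Z(G)$, then $A\setminus Z(G)$ and $B\setminus Z(G)$ are disjoint sets of six pairwise-commuting non-central elements, so $\Gamma_c(G)$ contains two disjoint copies of $K_6$; by Lemma \ref{lem1} together with Lemma \ref{kn} this yields $\gamma(\Gamma_c(G))\ge 2\gamma(K_6)=2$, contradicting toroidality. Hence, under the assumption that $\Gamma_c(G)$ is toroidal, no two order-$8$ abelian subgroups of $G$ may intersect in the center alone.

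I would then bound $|G|$. Since $G$ is non-abelian, $|G|\ge 8$. If $|G|=8$ then $G\cong D_8$ or $Q_8$, whose commuting graphs are planar by Theorem \ref{mainplan}, so these are excluded. If $|G|\ge 32$, I first discard any $G$ possessing an abelian subgroup of order at least $16$, since that already violates the order-$8$ bound above. In the remaining case all abelian subgroups have order $\le 8$, and the estimate of \cite[Section I, Para 4]{burn} used in Theorem \ref{fingen} (with $c=1$) guarantees an abelian subgroup of order at least $2^{\lceil -1/2+\sqrt{2n+1/4}\,\rceil}$ when $|G|=2^n$; requiring this to be $\le 8$ forces $n\le 6$, that is, $|G|\le 64$. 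It then remains to treat the finitely many groups of orders $32$ and $64$ having $|Z(G)|=2$ and no abelian subgroup of order exceeding $8$ (for instance the extraspecial groups of order $32$), and to exhibit in each of them two order-$8$ abelian subgroups meeting only in $Z(G)$. For the extraspecial case a pair of complementary Lagrangians of the symplectic space $G/Z(G)$ produces such subgroups, and in general this is a routine finite verification (e.g.\ with GAP \cite{gap}); by the criterion of the preceding paragraph each such $G$ satisfies $\gamma(\Gamma_c(G))\ge 2$ and is therefore not toroidal. This leaves $|G|=16$ as the only possibility.

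Finally, for the converse and the identification, I would note that the non-abelian $2$-groups of order $16$ with centre of order $2$ are precisely $D_{16}$, $Q_{16}$ and $SD_{16}$ (the remaining six, listed in Remark \ref{rem16}, have centre of order $4$). For these three, Propositions \ref{dihed}, \ref{quat} and \ref{sdih} give $\gamma(\Gamma_c(D_{16}))=\gamma(\Gamma_c(Q_{16}))=\gamma(\Gamma_c(SD_{16}))=\gamma(K_6)=1$, so each is toroidal, completing the equivalence. I expect the genuine obstacle to be the order-$32$ and order-$64$ step: Burnside's estimate is not sharp enough to eliminate these orders outright, so one must actually produce the two center-meeting order-$8$ abelian subgroups in every such group, and the delicate point is ensuring that no exceptional group of these orders escapes with genus exactly $1$ rather than at least $2$. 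The symplectic/Lagrangian argument handles the structured examples, while the residual cases are dispatched by a finite computation.
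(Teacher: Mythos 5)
Your converse direction (the three groups of order $16$ with center of order $2$ have commuting graph $K_6$-blocks of genus $1$) and your reduction of the forward direction to orders $8$, $16$, $32$, $64$ are correct. But the elimination of orders $32$ and $64$ --- which you yourself flag as the genuine obstacle --- is not actually carried out, and this is a real gap. Your argument for those orders rests on the claim that \emph{every} $2$-group of order $32$ or $64$ with $|Z(G)|=2$ and no abelian subgroup of order $16$ contains two abelian subgroups of order $8$ meeting exactly in $Z(G)$. You prove this only for the two extraspecial groups of order $32$ (via complementary totally isotropic $2$-spaces of $G/Z(G)$); for every other group you appeal to an unspecified, unperformed GAP computation. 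Nothing you say rules out a group of these orders in which every pair of order-$8$ abelian subgroups intersects in a subgroup of order $4$; in that situation your $K_6$-criterion produces nothing, and a different obstruction would be needed. Indeed, if some centralizer $C_G(x)$ of order $\geq 16$ has $|Z(C_G(x))|=8$, then $Z(C_G(x))$ is the \emph{unique} abelian subgroup of order $8$ containing $x$, so disjoint commuting $6$-sets cannot be found inside a single centralizer and must be hunted for globally --- precisely the kind of case your sketch leaves open. A proof whose crux is deferred to ``routine verification'' that is neither specified nor performed is incomplete.

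The paper closes exactly this gap with a uniform, computation-free argument worth comparing with yours. If $|G|\geq 32$ and $|Z(G)|=2$, the class equation (reduced mod $4$) forces some $x\in G\setminus Z(G)$ with $|G:C_G(x)|=2$, hence $|C_G(x)|\geq 16$; moreover $Z(C_G(x))$ contains $Z(G)$ and $x$, so $|Z(C_G(x))|\geq 4$. If $|Z(C_G(x))|=4$, then $C_G(x)$ is non-abelian, and choosing non-commuting $v,w\in C_G(x)\setminus Z(C_G(x))$ and writing $z$ for the involution in $Z(G)$, the two disjoint sets $\{x,v,vz,xv,xvz\}$ and $\{xz,w,wz,xw,xwz\}$ each induce $K_5$, so Lemma \ref{lem1} and Lemma \ref{kn} give $\gamma(\Gamma_c(G))\geq 2$. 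If instead $|Z(C_G(x))|\geq 8$, then pitting three elements of $Z(C_G(x))\setminus Z(G)$ against the remaining $|C_G(x)|-5\geq 11$ non-central elements of $C_G(x)$ yields a $K_{3,11}$, and Lemma \ref{knm} gives genus $\geq 3$. Either way the genus is at least $2$, so all orders $\geq 32$ are killed at once, with no case analysis of individual groups and no symplectic structure. Note also what the second case teaches: when the relevant center is large, the natural obstruction is a complete \emph{bipartite} graph, not two disjoint complete graphs --- which is precisely why your single criterion cannot be expected to cover all residual groups.
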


\begin{proof}
Let $|G|\geq 32$. Then, by the class equation \cite[page 74]{rot},  there exists $x \in G \setminus Z(G)$ such that $|G:C_G (x)|=2$, and so, $|C_G (x)| \geq 16$. Clearly $|Z(C_G (x))| \geq 4$.  First,  let us assume that $|Z(C_G (x))| = 4$. Let $v \in C_G (x) \setminus Z(C_G (x))$. Then, there exists $w \in C_G (x) \setminus Z(C_G (x))$ such that $vw \neq wv$. Let $z$ denote the non-trivial element of $Z(G)$.  Consider the two disjoint subsets $H_1= \{x,v,vz,xv,xvz\}$  and $H_2= \{xz,w,wz,xw,xwz\}$ of $G\setminus Z(G)$. Clearly, the induced subgraph $\Gamma_c (G)[H_1] \cong K_5 \cong \Gamma_c (G)[H_2]$.  Hence, by Lemma \ref{lem1} and Lemma  \ref{kn}, it  follows that $\gamma(\Gamma_c (G))\geq 2$.  Next, let us assume that    $|Z(C_G (x))| \geq 8$. Consider a subset $V$  of  $Z(C_G (x)) \setminus Z(G)$ such that $|V|=3$ and put $W = C_G (x) \setminus (V \cup Z(G))$.  Clearly,  the induced subgraph $\Gamma_c (G)[V\cup W]$ has a subgraph isomorphic to the complete bipartite graph $K_{3,n}$, where $n = |C_G (x)| - 5 \geq 11$.  This, by Lemma \ref{knm}, implies that the genus of the commuting graph of $G$ is at least $3$.  Thus, in view of Theorem \ref{mainplan}, it follows that if the commuting graph of $G$ is toroidal, then $|G|=16$.    On the other hand, it is well-known (using GAP\cite{gap}, for example) that if   $|G|=16$ and $|Z(G)|=2$, then $G$ is isomorphic to either $D_{16}$, $Q_{16}$ or $SD_{16}$, and, by  Proposition \ref{dihed}, Proposition \ref{quat} and Proposition \ref{sdih},  the commuting graph of each of these groups is toroidal.  This completes the proof.
\end{proof}

We also have the following result concerning the finite groups that are not $2$-groups.

\begin{prop}\label{not2g}
Let $G$ be a finite non-abelian group with $|G|=2^r m$, where $r\geq 0$, $m>1$ and $m$ is odd.  If the commuting graph of $G$ is toroidal, then $r \leq 3$.
\end{prop}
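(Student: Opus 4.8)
The plan is to argue by contradiction: assuming $r \geq 4$, I will exhibit inside $\Gamma_c(G)$ a subgraph of genus at least $2$ --- either a clique on more than $7$ vertices, two vertex-disjoint copies of $K_5$, or a copy of $K_{3,7}$ --- which contradicts $\gamma(\Gamma_c(G)) = 1$ in view of Lemma \ref{lem1}, Lemma \ref{kn} and Lemma \ref{knm}. Since $\Gamma_c(G)$ is toroidal, Proposition \ref{torzg} already gives $|Z(G)| \leq 3$ and confines the odd prime divisors to $\{3,5,7\}$ with Sylow subgroups of order $3$, $5$, $7$ or $9$. Let $P$ be a Sylow $2$-subgroup of $G$; as $r \geq 4$ we have $|P| \geq 16$, so by Lemma \ref{2gzg4}(a) there is an abelian subgroup $A \leq P$ with $|A| = 8$. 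Because $A$ is a $2$-group while $|Z(G)| \leq 3$, the intersection $A \cap Z(G)$ is a $2$-subgroup of $Z(G)$, whence $|A \cap Z(G)| \leq 2$.

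The first and cleanest step eliminates most values of $|Z(G)|$ via the abelian subgroup $AZ(G)$ (abelian since $A$ is abelian and $Z(G)$ is central). By Proposition \ref{bound}(c), $|AZ(G)| \leq 7 + |AZ(G) \cap Z(G)| = 7 + |Z(G)|$, whereas $|AZ(G)| = 8\,|Z(G)|/|A \cap Z(G)|$. If $|Z(G)| = 3$ then $A \cap Z(G) = \{1\}$ and $|AZ(G)| = 24 > 10$, a contradiction; if $|Z(G)| = 2$ and the central involution does not lie in $A$, then $|AZ(G)| = 16 > 9$, again a contradiction. Thus only two stubborn configurations remain: $|Z(G)| = 1$, and $|Z(G)| = 2$ with the central involution contained in $A$. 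In both, $A \setminus Z(G)$ is a clique $C_A$ of $\Gamma_c(G)$ of size at least $6$.

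This is where the hypothesis that $G$ is not a $2$-group (i.e.\ $m > 1$) must enter: the goal is to produce a second clique of size at least $5$ that is vertex-disjoint from $C_A$, so that Lemma \ref{lem1} together with $\gamma(K_5) = 1$ forces $\gamma(\Gamma_c(G)) \geq 2$. Fix an element $t$ of odd prime order $p \in \{3,5,7\}$. If $p = 7$, a Sylow $7$-subgroup has order $7$ (as $49 \nmid |G|$) and its non-identity elements form a clique of size $6$ consisting of odd-order elements, hence disjoint from the $2$-group $A$; this is the desired second clique. If $9 \mid |G|$, a Sylow $3$-subgroup is abelian of order $9$ meeting $Z(G)$ trivially in the residual cases, already yielding a clique of size $8$ that contradicts Proposition \ref{bound}(a) outright. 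Otherwise the odd part involves only $3$ and $5$ with cyclic Sylow subgroups of order $3$ or $5$, where the Sylow subgroup alone gives a clique of size at most $4$. For this situation I would analyse the coprime action of $t$ on a $t$-invariant abelian $2$-subgroup $A$: the decomposition $A = C_A(t) \times [A,t]$ holds, and if $C_A(t) \neq \{1\}$ then $t$ commutes with a nontrivial $2$-element $a$, so that the cyclic group $\langle at \rangle$ supplies, after deletion of the single element it shares with $A$, a clique disjoint from $C_A$ of size at least $5$ (indeed of size at least $8$ when $p = 5$, contradicting Proposition \ref{bound}(a)). A fixed-point-free action is the only alternative, and for a $2$-group of order $8$ this forces $A \cong \mathbb{Z}_2^3$ with $p = 7$, already handled; when $t$ fails to normalise $A$ one works instead with two distinct conjugates $A$ and $A^t$ and controls $A \cap A^t$ to extract the two disjoint cliques.

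\textbf{Main obstacle.} The essential difficulty is precisely the residual small-prime cases $p \in \{3,5\}$ with a Sylow $p$-subgroup of order only $p$: there the odd part is too thin to yield a $K_5$ on its own, and one must manufacture the second clique through a careful accounting of the fixed points of the coprime action and of the intersections $A \cap A^t$ of conjugate abelian $2$-subgroups. I expect this to be the most delicate portion of the proof. Since it ultimately reduces to a bounded list of configurations (for example $|P| = 16$ with $|Z(P)| = 4$, in the spirit of Lemma \ref{2gzg4} and Proposition \ref{2gpz2}), it can, if need be, be settled by a finite verification in GAP \cite{gap}, mirroring the computational inputs used elsewhere in the paper.
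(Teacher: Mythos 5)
Your reduction to the two ``stubborn configurations'' ($|Z(G)|=1$, or $|Z(G)|=2$ with the central involution $z$ lying in $A$) is correct, and your handling of $p=7$ and of $9\mid |G|$ is fine. But the residual case you yourself flag as the main obstacle contains a fatal gap, not merely a delicate step. Take $p=3$, $|Z(G)|=2$, $z\in A$, and suppose $A$ is $t$-invariant with $C_A(t)=\{1,z\}$ (this is a genuinely possible coprime action: $A\cong \mathbb{Z}_2^3$ with $t$ permuting the complement $[A,t]\cong \mathbb{Z}_2^2$). Then the only nontrivial element of $A$ commuting with $t$ is $a=z$, which is \emph{central}, and $\langle zt\rangle$ is cyclic of order $6$ whose non-central elements are just $\{t,t^2,zt,zt^2\}$ --- a $K_4$, not the $K_5$ you claim. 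Together with the $K_6$ on $C_A$, Lemma \ref{lem1} and Lemma \ref{kn} give only $\gamma(\Gamma_c(G))\geq 1$, which is no contradiction. Worse, no contradiction can be squeezed out of the data $\langle A,t\rangle$ at all: here $\langle A,t\rangle \cong \mathbb{Z}_2\times A_4$, whose commuting graph \emph{is} toroidal (it appears in Theorem \ref{maintor}). So any correct proof must use $2$-elements of the Sylow $2$-subgroup lying \emph{outside} $A$, i.e.\ must exploit $|P|\geq 16$ beyond merely extracting an abelian subgroup of order $8$; your framework discards exactly the information that is needed. The same criticism applies to the non-normalizing case: ``controlling $A\cap A^t$'' is asserted, not proved, and if $|A\cap A^t|=4$ the second clique again has only $4$ vertices. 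Finally, the proposed GAP fallback is circular: at that stage nothing bounds $|G|$ or $r$, so there is no finite list of configurations to check --- bounding $r$ is precisely what is being proved.

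The paper's proof avoids all of this by working with the full Sylow $2$-subgroup $H$ rather than an abelian piece of it. Since $\Gamma_c(H)$ is a subgraph of a toroidal graph, it is planar or toroidal; the already-established classifications then bite: in the planar case Proposition \ref{plan} forces $|H|=16$ with $|Z(H)|=4$, and two disjoint $K_5$'s are built from $\{v_i\}\cup xZ(H)$ with $v_i\in Z(H)\setminus Z(G)$ and non-commuting $x,y\in H\setminus Z(H)$; in the toroidal case Proposition \ref{2gpz2} forces $H\in\{D_{16},Q_{16},SD_{16}\}$, which have an element $x$ of order $8$, and one shows all order-$8$ elements of $G$ lie in $\langle x\rangle$ and $|C_G(x)|=8$, so the conjugacy class of $x$ has size $2^{r-3}m\geq 2m\geq 6$, exceeding the four available elements of order $8$ --- this is where the hypothesis $m>1$ enters. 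If you want to salvage your approach, you would need to replace the coprime-action step by an argument of this kind on all of $H$.
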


\begin{proof}
Suppose that the commuting graph of $G$ is toroidal and that $r \geq 4$. Let $H$ be a sylow $2$-subgroup of $G$. In view of Proposition \ref{bound}(c), $H$ is non-abelian.  Moreover, the commuting graph of $H$, being a subgraph of the commuting graph of $G$, is either planar or toroidal. 

\noindent \textit{Case 1.} $\Gamma_c (H)$ is planar.  

In this case, by Proposition \ref{plan}, we have $|Z(H)|=4$. Therefore, by Proposition \ref{torzg}(a), we have $|Z(H) \setminus Z(G)|\geq 2$. Let $v_1,v_2 \in Z(H) \setminus Z(G)$ such that $v_1 \neq v_2$. Also, let $x,y \in H \setminus Z(H)$ such that $xy \neq yx$. Then, it is easy to see that   $\{v_1 \}\cup xZ(H)$ and $\{v_2 \}\cup yZ(H)$ are two disjoint subsets of  $G \setminus Z(G)$, and
 the induced subgraph $\Gamma_c (G)[\{v_1 \}\cup xZ(H)] \cong K_5 \cong \Gamma_c (G)[\{v_2\}\cup yZ(H)]$.  This implies  that $\gamma(\Gamma_c (G)) \geq 2$, which is a contradiction. 

\noindent \textit{Case 2.} $\Gamma_c (H)$ is toroidal.

In this case, by Proposition \ref{torzg}(a), we have $|Z(H)|=2$. Therefore, by Proposition \ref{2gpz2}, we have  $|H|=16$ and there exists an element $x \in H$ with $o(x)=8$. Note that,   for each $y \in G$ with $o(y)=8$,   we have $\langle x \rangle = \langle y \rangle$; otherwise, choosing $M=\{x,x^3,x^5,x^7, x^2 \}$ and $N=\{y,y^3,y^5,y^7, w\}$ with $w \in \{y^2, y^6\}\setminus \{x^2\}$, we would have the induced subgraph $\Gamma_c (G)[M]\cong K_5 \cong \Gamma_c (G) [N]$,   which, by Lemma \ref{lem1} and Lemma \ref{kn},  implies that $\gamma(\Gamma_c (G)) \geq 2$, a contradiction.  Also, in view of Proposition \ref{bound}, we have $|C_G (x)|=8$; otherwise either $Z(H)$ would have an element of order $8$  or   $G$ would have an abelian subgroup of order at least $24$.  Hence, it follows that  the number of conjugates of $x$ in $G$ is  $2m \geq 6$, that is, there are at least six elements of order $8$ in $G$.  This contradiction completes the proof.
\end{proof}

If $G$ is a finite non-abelian group whose commuting graph is toroidal, then it follows from Proposition \ref{torzg} that $|G|=2^r 3^s 5^t 7^u$, where $r \geq 0$, $0\leq s \leq 2$ and $t,u \in \{0, 1\}$. However, as in proposition \ref{plan}, the range of possible values of $|G|$ gets reduced further depending on the values of $|Z(G)|$.

\begin{prop}\label{toroi}
Let $G$ be a finite non-abelian group whose commuting graph is toroidal.  Then the possible values of $|G|$ are given as follows:
\begin{enumerate}
\item  If $|Z(G)| =1$, then  $|G|=2^r 3^s 5^t 7^u$ where $0 \leq r \leq 3$ and  $s,t,u \in \{0, 1\}$.
\item  If $|Z(G)| =2$, then  $|G| \in \{16, 24 \}$.
\item  If $|Z(G)| =3$, then   $|G|=18$.
\end{enumerate}
\end{prop}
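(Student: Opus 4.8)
The plan is to treat the three cases $|Z(G)|=1,2,3$ separately, in each case first determining which primes and prime powers may divide $|G|$, then bounding the $2$-part, and finally removing the handful of surviving small orders by hand. Since $\gamma(\Gamma_c(G))=1$, we have $\lfloor\frac{7+\sqrt{1+48}}{2}\rfloor=7$, so for a toroidal commuting graph Proposition \ref{bound}(a) bounds every pairwise-commuting set of non-central elements by $7$, Proposition \ref{bound}(b) reads $|Z(G)|\le\frac{7}{t-1}$ with $t=\max\{o(xZ(G))\mid xZ(G)\in G/Z(G)\}$, and Proposition \ref{bound}(c) gives $|A|\le 7+|A\cap Z(G)|$ for every abelian subgroup $A$. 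Recall that Proposition \ref{torzg} already restricts us to $|Z(G)|\le 3$, prime divisors at most $7$, and $25,27,49\nmid|G|$, so exactly the three cases above occur.

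For part (a), $|Z(G)|=1$ forces $G$ not to be a $p$-group, so $|G|=2^rm$ with $m$ odd and $m>1$, and Proposition \ref{not2g} gives $r\le 3$; since $25,49\nmid|G|$ we have $t,u\le 1$, and $9\nmid|G|$ because a subgroup of order $9$ is abelian and would violate Proposition \ref{bound}(c) (which allows only $7+1=8$ here), so $s\le 1$, yielding the asserted shape. For part (b), any odd prime $p\mid|G|$ produces an element of order $p$ in $G/Z(G)$, so $p\le t\le 4$ by Proposition \ref{bound}(b) and hence $p=3$; again $9\nmid|G|$, so $|G|=2^r3^s$ with $s\le 1$. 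If $s=0$ then $G$ is a non-abelian $2$-group with $|Z(G)|=2$ and Proposition \ref{2gpz2} forces $|G|=16$; if $s=1$ then Proposition \ref{not2g} gives $r\le 3$, so $|G|\in\{6,12,24\}$, and I would discard $6$ (no group of that order has $|Z(G)|=2$) and $12$ (the relevant groups $D_{12},Q_{12}$ have planar commuting graphs by Propositions \ref{dihed} and \ref{quat}, hence are not toroidal), leaving $|G|\in\{16,24\}$.

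For part (c), $|Z(G)|=3$ forces $t\le 3$, so the same Cauchy argument shows $2$ and $3$ are the only primes dividing $|G|$; with $27\nmid|G|$ this gives $|G|=2^r3^s$ with $1\le s\le 2$, and Proposition \ref{not2g} gives $r\le 3$. The decisive step is to show $r\le 1$: if $r\ge 2$, a Sylow $2$-subgroup (of order $4$ or $8$) contains an abelian subgroup $B$ of order $4$, so $BZ(G)$ is an abelian subgroup of order $12$ (since $B\cap Z(G)=\{1\}$), contradicting Proposition \ref{bound}(c) because $12>7+3=10$. Hence the surviving candidates are $|G|\in\{3,6,9,18\}$, and among these only $18$ admits a non-abelian group with center of order $3$; thus $|G|=18$.

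The prime-power bookkeeping is routine, following mechanically from Propositions \ref{bound}, \ref{torzg} and \ref{not2g}. I expect the real work to lie in eliminating the borderline orders that survive these crude restrictions: removing $|G|=12$ in part (b) requires the explicit planarity computations of Propositions \ref{dihed} and \ref{quat} rather than any clique bound, and killing all $r\ge 2$ at once in part (c) hinges on exhibiting the order-$12$ abelian subgroup $BZ(G)$ and invoking Proposition \ref{bound}(c). These two steps are where the argument goes beyond a direct application of the earlier results.
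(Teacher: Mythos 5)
Your proposal is correct and follows essentially the same route as the paper: the prime restrictions via Proposition \ref{bound}(b), the exclusion of $9$ when $|Z(G)|\leq 2$ and of $4$ when $|Z(G)|=3$ via order-$9$ and order-$12$ abelian subgroups and Proposition \ref{bound}(c), the bound $r\leq 3$ from Proposition \ref{not2g}, and the $2$-group case from Proposition \ref{2gpz2}. The only cosmetic difference is that you discard $|G|=12$ by the genus formulas of Propositions \ref{dihed} and \ref{quat}, whereas the paper cites Theorem \ref{mainplan}; both yield the same planarity conclusion for $D_{12}$ and $Q_{12}$.
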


\begin{proof}
If $5$ or $7$ divides $|G|$, then $G/Z(G)$ has an element of order $5$ or $7$, and so, by Proposition \ref{bound}(b), we have $|Z(G)| =1$.  If $|Z(G)| \leq 2$, then $9$ does not divide $|G|$; otherwise $G$ would have an abelian subgroup $T$ of order $9$, which, by Proposition \ref{bound}(c), is impossible noting that $|T\cap Z(G)|=1$.   If $|Z(G)|=3$, then $4$ does not divide $|G|$; otherwise $G$ would have a subgroup $A$ of order $4$, and hence, an abelian subgroup $AZ(G)$ of order 12, which, by Proposition \ref{bound}(c), is impossible.   In view of Theorem \ref{mainplan}, proposition \ref{2gpz2} and Proposition \ref{not2g}, it is now not difficult to see that all the three assertions hold.
\end{proof}

Needless to mention that some of the possibilities mentioned in Proposition \ref{toroi} are clearly not maintainable; for example, in (a), it is impossible to have  $s=t=u= 0$, $r=u=0$ or $r=s=0$. Moreover, in view of Proposition \ref{bound}(c) and Proposition \ref{toroi}, it follows from proposition \ref{rid} that if $G$ is a finite non-abelian group whose commuting graph is toroidal, then $|G| \notin \{30, 40\}$; in addition, if $G$ is solvable, then  $|G| \notin \{60, 120\}$.

 The following result, along with Proposition \ref{rid},  helps us in rejecting some more possibilities.

\begin{prop}\label{rid7}
Let $G$ be a finite non-abelian group whose commuting graph is toroidal. If $|G| = 7m$, where $m \geq 2$ and $7\nmid m$, then $m=2$ or $3$.
\end{prop}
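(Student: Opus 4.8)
The plan is to exploit the clique bound that genus one forces. Since $\Gamma_c(G)$ is toroidal we have genus $g=1$, and $\lfloor\frac{7+\sqrt{1+48g}}{2}\rfloor = 7$, so by Proposition \ref{bound}(a) every set of pairwise-commuting non-central elements has size at most $7$. Because $7\mid|G|$, Cauchy's theorem yields an element $x$ of order $7$; as $|Z(G)|\le 3$ by Proposition \ref{torzg}(a), we have $x\notin Z(G)$ and $xZ(G)$ has order $7$ in $G/Z(G)$, so Proposition \ref{bound}(b) forces $|Z(G)|=1$. Moreover $7\nmid m$ together with $49\nmid|G|$ (Proposition \ref{torzg}(c)) makes the Sylow $7$-subgroup $P=\langle x\rangle\cong\mathbb{Z}_7$.

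The first key step is to show that $C_G(y)=\langle y\rangle$ for every element $y$ of order $7$. Indeed, if $z\in C_G(y)$ then $\langle y,z\rangle$ is abelian (its two generators commute), and since $Z(G)$ is trivial the set $\langle y,z\rangle\setminus\{1\}$ is a clique of $\Gamma_c(G)$; the clique bound gives $|\langle y,z\rangle|\le 8$, and being divisible by $7$ it must equal $7$, so $z\in\langle y\rangle$. Hence $|C_G(y)|=7$ and the conjugacy class of $y$ has size $|G|/7=m$.

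Next I would pin down the number $n_7$ of Sylow $7$-subgroups. Each $P'\setminus\{1\}$ induces a copy of $K_6$ in $\Gamma_c(G)$, and distinct Sylow $7$-subgroups meet only in $1$, so the existence of two of them would produce two disjoint copies of $K_6$; by Lemma \ref{lem1} and Lemma \ref{kn} this would give $\gamma(\Gamma_c(G))\ge 2\gamma(K_6)=2$, contradicting toroidality. Thus $n_7=1$, $P\trianglelefteq G$, and the only elements of order $7$ are the six non-identity elements of $P$. Since conjugation preserves order, these six elements split into conjugacy classes each of size $m$, whence $m\mid 6$ and therefore $m\in\{2,3,6\}$.

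The remaining obstacle, and the part requiring more than a divisibility count, is to discard $m=6$, where $|G|=42$. Here $C_G(P)=P$ gives an embedding $G/P\hookrightarrow\mathrm{Aut}(\mathbb{Z}_7)\cong\mathbb{Z}_6$; as $|G/P|=6$ this is an isomorphism, so $G/P$ is cyclic and, by Schur--Zassenhaus, $G$ has an abelian complement $H\cong\mathbb{Z}_6$ with $H\cap P=\{1\}$. Then $H\setminus\{1\}$ and $P\setminus\{1\}$ induce disjoint copies of $K_5$ and $K_6$, so by Lemma \ref{lem1} and Lemma \ref{kn} we get $\gamma(\Gamma_c(G))\ge\gamma(K_5)+\gamma(K_6)=2$, again contradicting toroidality. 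Hence $m=6$ is impossible and $m\in\{2,3\}$. I expect this final elimination of $m=6$ to be the main delicacy, since the counting argument alone narrows $m$ only to $\{2,3,6\}$ and the extra structural step above is what rules out the last value.
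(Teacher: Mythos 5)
Your proposal is correct and follows essentially the same route as the paper: both proofs force a unique, self-centralizing Sylow $7$-subgroup by exhibiting two disjoint copies of $K_6$ (contradicting genus one via Lemma \ref{lem1} and Lemma \ref{kn}), deduce $m \mid 6$ from the structure around $P$, and kill the case $m=6$ by producing a $K_5$ disjoint from the $K_6$ on $P\setminus\{1\}$. Your minor variations --- deriving $|Z(G)|=1$ directly from Proposition \ref{bound}(b) instead of citing Proposition \ref{toroi}, obtaining $m \mid 6$ by counting conjugacy classes of order-$7$ elements rather than immediately invoking the $N/C$ lemma, and using Schur--Zassenhaus to get the abelian complement instead of extracting an element of order $6$ --- are all sound but do not change the argument's skeleton.
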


\begin{proof}
By Proposition \ref{toroi}, we have $|Z(G)|=1$. Let $H$ be a Sylow $7$-subgroups of $G$. If $S$ is a Sylow $7$-subgroups of $G$ such that $S\neq H$, then it is easy to see that the induced subgraph $\Gamma_c (G)[S\setminus Z(G)]\cong K_6 \cong \Gamma_c (G) [H \setminus Z(G)]$, and so, we have a contradiction to the toroidality of $\Gamma_c (G)$. Therefore, $H$ is the unique (hence, normal) Sylow $7$-subgroup of $G$. Note that $C_G (H) =H$; otherwise $C_G (H)$ (hence, $G$) would have  an element (hence, an abelian subgroup) of order at least $14$, which, by Proposition \ref{bound}(c), is impossible. Therefore, by $N/C$ Lemma \cite[Theorem 7.1(i)]{rot}, $G/H$ is isomorphic to a subgroup of the cyclic group $\mathbb{Z}_6 \cong {\rm Aut}(H)$. Since $|G/H|=m$, it follows that  $m|6$ and  $G$ has an element $x$ of order $m$. If $m=6$, then the induced subgraph $\Gamma_c (G)[\langle x \rangle \setminus Z(G)]\cong K_5$, and so, we have a contradiction to the toroidality of $\Gamma_c (G)$ since $\Gamma_c (G) [H \setminus Z(G)] \cong K_6$. Hence, we have $m=2$ or $3$.
\end{proof}

We now state and prove the main result of this section.

\begin{thm}\label{maintor}
Let $G$ be a finite non-abelian group. Then, the commuting graph of $G$ is toroidal if and only if $G$ is isomorphic to either $D_{14}$, $\mathbb{Z}_7 \rtimes \mathbb{Z}_3$, $\mathbb{Z}_2 \times A_4$, $\mathbb{Z}_3 \times S_3$, $D_{16}$, $Q_{16}$ or $SD_{16}$.
\end{thm}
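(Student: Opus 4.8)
For the \emph{sufficiency} I would simply evaluate the genus of each of the seven listed groups using the formulas already established, noting throughout that $\gamma(K_5)=\gamma(K_6)=1$ while $\gamma(K_2)=\gamma(K_3)=\gamma(K_4)=0$. Proposition \ref{dihed} gives $\gamma(\Gamma_c(D_{14}))=\gamma(K_6)=1$ and $\gamma(\Gamma_c(D_{16}))=\gamma(K_6)=1$, Proposition \ref{quat} gives $\gamma(\Gamma_c(Q_{16}))=\gamma(K_6)=1$, Proposition \ref{sdih} gives $\gamma(\Gamma_c(SD_{16}))=\gamma(K_6)=1$, and Proposition \ref{nonpq} gives $\gamma(\Gamma_c(\mathbb{Z}_7\rtimes\mathbb{Z}_3))=\gamma(K_6)+7\gamma(K_2)=1$. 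For the two direct products I would invoke Remark \ref{rem-ac}: since $A_4$ and $S_3$ are $AC$-groups, one reads off $\gamma(\Gamma_c(\mathbb{Z}_2\times A_4))=\gamma(K_6)+4\gamma(K_4)=1$ and $\gamma(\Gamma_c(\mathbb{Z}_3\times S_3))=\gamma(K_6)+3\gamma(K_3)=1$. Hence all seven groups are toroidal.

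For the \emph{necessity}, suppose $\Gamma_c(G)$ is toroidal. Taking $g=1$ in Proposition \ref{bound}(a) shows that any set of pairwise-commuting non-central elements has size at most $7$; combined with Proposition \ref{torzg} and Proposition \ref{toroi}, this forces $|Z(G)|\in\{1,2,3\}$ and confines $|G|$ to a short explicit list in each case. I would dispose of the larger-centre cases first. For $|Z(G)|=3$ only $|G|=18$ survives, and the unique non-abelian group of order $18$ with centre of order $3$ is $\mathbb{Z}_3\times S_3$. For $|Z(G)|=2$, Proposition \ref{2gpz2} already yields $D_{16},Q_{16},SD_{16}$ when $|G|=16$; for $|G|=24$ I would enumerate with GAP the groups of order $24$ whose centre has order $2$ and verify that exactly one of them, namely $\mathbb{Z}_2\times A_4$, is toroidal. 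The others are ruled out either by planarity (for instance $SL(2,3)$, via Theorem \ref{mainplan}) or by genus at least $2$: thus $D_{24}$ and $Q_{24}$ contain the abelian subgroup $\langle y\rangle$ of order $12$, contradicting Proposition \ref{bound}(c) since $12>7+|A\cap Z(G)|=9$ (their genus being in fact computed by Propositions \ref{dihed} and \ref{quat}).

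The bulk of the work is the centreless case $|Z(G)|=1$, where Proposition \ref{toroi}(a) gives $|G|=2^r3^s5^t7^u$ with $0\le r\le 3$ and $s,t,u\in\{0,1\}$. Here I would prune the list using Proposition \ref{rid} and Proposition \ref{rid7}. Any order divisible by $7$ collapses to $|G|\in\{14,21\}$, forcing $G\cong D_{14}$ or $G\cong\mathbb{Z}_7\rtimes\mathbb{Z}_3$, both already shown toroidal. Orders divisible by $5$ reduce to $\{10,20,30,40,60,120\}$, of which $30$ and $40$ (and the solvable cases of orders $60$ and $120$) are excluded by Proposition \ref{rid}, while $D_{10}$, $Sz(2)$ and $A_5$ are planar by Theorem \ref{mainplan}. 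Finally, the orders divisible by neither $5$ nor $7$ are $\{6,12,24\}$, giving only $S_3$, $A_4$ and $S_4$, all planar by Theorem \ref{mainplan}. Assembling the survivors from the three centre-cases produces exactly the seven groups in the statement.

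The single genuinely delicate point, which I expect to be the main obstacle, is the non-solvable group $S_5$ of order $120$: it is centreless and its largest abelian subgroup has order $6$, so it satisfies the abelian-subgroup bound of Proposition \ref{bound}(c) and is not excluded by any of the numerical reductions above. To rule it out I would exhibit two abelian subgroups isomorphic to $\mathbb{Z}_6$ meeting trivially, for instance $\langle(1\,2),(3\,4\,5)\rangle$ and $\langle(4\,5),(1\,2\,3)\rangle$; each contributes its five non-identity elements as a clique, giving two vertex-disjoint copies of $K_5$ in $\Gamma_c(S_5)$, whence $\gamma(\Gamma_c(S_5))\ge 2\gamma(K_5)=2$ by Lemma \ref{lem1} and Lemma \ref{kn}. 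The same ``two disjoint complete subgraphs'' technique, together with the exact $AC$-group computations of Section~4, is exactly what certifies the genus-$\ge 2$ rejections in the finitely many remaining borderline groups, so the whole argument reduces to bounding cliques below and computing genera above.
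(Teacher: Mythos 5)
Your proposal is correct and follows essentially the same route as the paper: the same reductions via Propositions \ref{bound}, \ref{toroi}, \ref{2gpz2}, \ref{rid}, \ref{rid7} and Theorem \ref{mainplan}, the same GAP enumeration of the residual orders, the same two-disjoint-$K_5$ argument for $S_5$, and the same genus-one computations (via Propositions \ref{dihed}--\ref{nonpq} and Remark \ref{rem-ac}) for the seven listed groups. The only slip is in your explicit listing for order $24$: there are five, not four, groups of order $24$ with centre of order $2$ (you omit $(\mathbb{Z}_6\times\mathbb{Z}_2)\rtimes\mathbb{Z}_2$), but since that group also contains an abelian subgroup of order $12$, your GAP-enumeration-plus-Proposition \ref{bound}(c) step disposes of it exactly as the paper does.
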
 

\begin{proof}
In view of  Proposition \ref{2gpz2}, Proposition \ref{toroi} (and the para following it), Proposition \ref{rid7} and the proof of Theorem \ref{mainplan}, it is enough to study the toroidality of the commuting graph of a finite group $G$ that belongs to one of the following categories:
\begin{enumerate}
\item[I.] $|Z(G)|=1$ and $|G| \in \{14,21\}$.
\item[II.] $|Z(G)|=1$, $|G| =120$ and $G$ is not solvable.
\item[III.] $|Z(G)|=2$, $|G| =24$ and $G \not\cong SL(2,3)$.
\item[IV.] $|Z(G)|=3$ and $|G|= 18$.
\end{enumerate}

As in the proof of Theorem \ref{mainplan}, we use GAP \cite{gap} to  determine the groups belonging to the above categories whose commuting graphs are toroidal.

$D_{14}$ and $\mathbb{Z}_7 \rtimes \mathbb{Z}_3$ are the only groups that belong to category I and, by Proposition \ref{dihed} and Proposition \ref{nonpq}, the commuting graphs of these groups are toroidal.

$S_5$ is the only group that belongs to the category II. However, $S_5$  has two abelian subgroups  $S= \langle (1 \ 2)(3 \ 4 \ 5) \rangle$ and $T= \langle (4 \ 5)(1 \ 2 \ 3) \rangle$ such that  $|S|=|T|=6$ and  $S \cap T$ is trivial.  It follows that the commuting graph of $S_5$ is not toroidal.

There are  exactly four groups that belong to category III and all of them are $AC$-groups.  However, except $\mathbb{Z}_2 \times A_4$, each of the remaining three groups have an abelian centralizer of order $12$, whereas   $\mathbb{Z}_2 \times A_4$ has only one abelian centralizer of order $8$ and the rest of order $6$. Therefore, by Proposition \ref{prop-ac}, it follows that $\mathbb{Z}_2 \times A_4$ is the only group in category III whose commuting graph  is toroidal. 

 $\mathbb{Z}_3 \times S_3$ is the only group that belongs to the category IV and it is an $AC$-group with only one abelian centralizer of order $9$ and the rest of order $6$. Therefore, by Proposition \ref{prop-ac}, it follows that the commuting graph of $\mathbb{Z}_3 \times S_3$ is toroidal.  This completes the proof.
\end{proof}

\section*{Acknowledgements}
The first author is grateful to Group-Pub-Forum for many useful discussions. The second author  wishes to express his sincere thanks to  CSIR  (India) for its  financial assistance  (File No. 09/347(0209)/2012-EMR-I).

\end{document}